\theoremstyle{plain}
\newtheorem{theorem}{Theorem}[section]
\newtheorem{corollary}[theorem]{Corollary}
\newtheorem{lemma}[theorem]{Lemma}
\theoremstyle{remark}
\theoremstyle{definition}
\title[On Generalized Bihyperbolic Third-order Jacobsthal Polynomials]{On Generalized Bihyperbolic Third-order Jacobsthal Polynomials}
\author{\scriptsize GAMALIEL CERDA-MORALES}
\date{}
\begin{document}
\maketitle

\vspace{-20pt}
\begin{center}
{\footnotesize Instituto de Matem\'aticas,\\
Pontificia Universidad Cat\'olica de Valpara\'iso,\\
Blanco Viel 596, Valpara\'iso, Chile.\\
E-mail: gamaliel.cerda.m@mail.pucv.cl
}\end{center}

\vspace{20pt}
\hrule

\begin{abstract}
In this paper, a new generalization of third-order Jacobsthal bihyperbolic polynomials is introduced. Some of the properties of presented polynomials are given. A Vadja formula for the generalized bihyperbolic third-order Jacobsthal polynomials is obtained. This result implies the Catalan, Cassini and d'Ocagne identities. Moreover, generating function and matrix generators for these polynomials are presented.
\end{abstract}

\medskip
\noindent
\subjclass{\footnotesize {\bf Mathematical subject classification:} 
11B37, 11B39.}

\medskip
\noindent
\keywords{\footnotesize {\bf Key words:} Third-order Jacobsthal number; hyperbolic number; bihyperbolic number; recurrence relation.}
\medskip

\hrule

\section{Introduction}
Let $\textbf{h}$ be the unipotent element such that $\textbf{h}\neq \pm 1$ and $\textbf{h}^{2}=1$. A hyperbolic number $\textrm{H}$ is defined as $\textrm{H}=a+b\textbf{h}$, where $a, b\in \mathbb{R}$. Denote by $\mathcal{H}$ the set of hyperbolic numbers. The hyperbolic numbers were introduced by Cockle (see \cite{Co1,Co2}).

The addition and subtraction of hyperbolic numbers is done by adding and subtracting the appropriate terms and thus their coefficients. The hyperbolic numbers multiplication can be made analogously as multiplication of algebraic expressions using the rule $\textbf{h}^{2}=1$. The real numbers $a$ and $b$ are called the real and unipotent parts of the hyperbolic number $\textrm{H}$, respectively. For others details concerning hyperbolic numbers see for example \cite{Po}.

In \cite{Ol}, Olariu introduced commutative hypercomplex numbers in different dimensions. One of $4$-dimensional commutative hypercomplex numbers is called the hyperbolic four complex number. In \cite{Po}, the authors used the name bihyperbolic numbers. Note that bihyperbolic numbers are a special case of generalized Segre's quaternions, being a $4$-dimensional commutative number system, and they are named as canonical hyperbolic quaternions (see \cite{Ca}). 

In this paper, we use the name bihyperbolic numbers. Analogously as bicomplex numbers are an extension of complex numbers, bihyperbolic numbers are a natural extension of hyperbolic numbers to $4$-dimension. 

Let $\mathcal{H}_{2}$ be the set of bihyperbolic numbers $\zeta$ of the form $$\zeta=a_{0}+a_{1}j_{1}+a_{2}j_{2}+a_{3}j_{3},$$ where $a_{i}\in \mathbb{R}$ ($i=0,1,2,3$), and $j_{1}, j_{2}, j_{3}\notin \mathbb{R}$ are operators such that
\begin{equation}\label{e1}
\begin{aligned}
j_{1}^{2}&=j_{2}^{2}=j_{3}^{2}=1,\\
j_{1}j_{2}&=j_{2}j_{1}=j_{3},\ j_{1}j_{3}=j_{3}j_{1}=j_{2},\ j_{2}j_{3}=j_{3}j_{2}=j_{1}.
\end{aligned}
\end{equation}

The addition and multiplication on $\mathcal{H}_{2}$ are commutative and associative. Moreover, $\left(\mathcal{H}_{2},+,\cdot\right)$ is a commutative ring. For the algebraic properties of bihyperbolic numbers review the work of Bilgin and Ersoy (see \cite{Bi}).

In this paper, we study some generalization of bihyperbolic third-order Jacobsthal numbers. The third-order Jacobsthal sequence $\{\mathcal{J}_{n}^{(3)}\}_{n\geq 0}$ is defined by the recurrence
\begin{equation}
\mathcal{J}_{n+3}^{(3)}=\mathcal{J}_{n+2}^{(3)}+\mathcal{J}_{n+1}^{(3)}+2\mathcal{J}_{n}^{(3)}\ \ (n\geq 0),
\end{equation}
with $\mathcal{J}_{0}^{(3)}=0$ and $\mathcal{J}_{1}^{(3)}=\mathcal{J}_{2}^{(3)}=1$. The Binet formula for the third-order Jacobsthal numbers has the form
\begin{equation}\label{b1}
\mathcal{J}_{n}^{(3)}=\frac{2^{n+1}}{7}-\frac{\omega_{1}^{n+1}}{(2-\omega_{1})(\omega_{1}-\omega_{2})}+\frac{\omega_{2}^{n+1}}{(2-\omega_{2})(\omega_{1}-\omega_{2})},
\end{equation}
where $\omega_{1}+\omega_{2}=-1$, and $\omega_{1}\omega_{2}=1$.

Some interesting properties of the third-order Jacobsthal numbers can be found in \cite{Co}. In the literature there are some generalizations of the third-order Jacobsthal numbers, see \cite{Ce1,Ce2,Ce3}. In \cite{Ce}, a one parameter generalization of the third-order Jacobsthal numbers was investigated. We recall this generalization.

For $x$ any variable quantity such that $x^{3}\neq 1$, the third-order Jacobsthal polynomials by the recurrence relation 
\begin{equation}\label{pol}
\mathcal{J}_{n+3}^{(3)}(x)=(x-1)\mathcal{J}_{n+2}^{(3)}(x)+(x-1)\mathcal{J}_{n+1}^{(3)}(x)+x\mathcal{J}_{n}^{(3)}(x),\ \ n\geq 0,
\end{equation}
with the initial conditions $\mathcal{J}_{0}^{(3)}(x)=0$, $\mathcal{J}_{1}^{(3)}(x)=1$ and $\mathcal{J}_{2}^{(3)}(x)=x-1$. It is easily seen that $\mathcal{J}_{n}^{(3)}(2)=\mathcal{J}_{n}^{(3)}$. Binet formula of third-order Jacobsthal polynomials $J_{n}^{(3)}(x)$ is given by 
\begin{equation}\label{bin1}
\mathcal{J}_{n}^{(3)}(x)=\frac{x^{n+1}}{x^{2}+x+1}-\frac{\omega_{1}^{n+1}}{(x-\omega_{1})(\omega_{1}-\omega_{2})}+\frac{\omega_{2}^{n+1}}{(x-\omega_{2})(\omega_{1}-\omega_{2})},
\end{equation}
where $x$ is any variable quantity such that $x^{3}\neq 1$, $\omega_{1}$ and $\omega_{2}$ are the roots of the characteristic equation $\lambda^{2}+\lambda+1=0$. 

In \cite{Da}, the Unrestricted Fibonacci quaternion $\mathcal{F}_{n}^{(a,b,c)}$ was introduced. For an integer $n$ and any integers $a$, $b$ and $c$, the generalized Fibonacci quaternion is defined by
$$\mathcal{F}_{n}^{(a,b,c)}=F_{n}+F_{n+a}i+F_{n+b}j+F_{n+c}k,$$
where $\{i,j,k\}$ is the standard basis of quaternions and $F_{n}$ is the $n$-th Fibonacci number. Motivated by the mentioned concept, in this paper, we introduce and study generalized bihyperbolic third-order Jacobsthal numbers.

\section{Generalized bihyperbolic third-order Jacobsthal polynomials}\label{sec:2}
Let $a\geq 1$, $b\geq 1$, $c\geq 1$ and $n\geq 0$ be integers, the $n$-th generalized bihyperbolic third-order Jacobsthal polynomial $\mathcal{BJ}_{n}^{(a,b,c)}(x)$ is defined as
\begin{equation}\label{e2}
\mathcal{BJ}_{n}^{(a,b,c)}(x)=\mathcal{J}_{n}^{(3)}(x)+\mathcal{J}_{n+a}^{(3)}(x)j_{1}+\mathcal{J}_{n+b}^{(3)}(x)j_{2}+\mathcal{J}_{n+c}^{(3)}(x)j_{3},
\end{equation}
where $\mathcal{J}_{n}^{(3)}(x)$ is the $n$-th third-order Jacobsthal polynomial and operators $j_{1}$, $j_{2}$, $j_{3}$ satisfy Eq. (\ref{e1}).

By Eq. (\ref{e2}) we obtain
\begin{equation}\label{e3}
\begin{aligned}
\mathcal{BJ}_{0}^{(a,b,c)}(x)&=\mathcal{J}_{0}^{(3)}(x)+\mathcal{J}_{a}^{(3)}(x)j_{1}+\mathcal{J}_{b}^{(3)}(x)j_{2}+\mathcal{J}_{c}^{(3)}(x)j_{3},\\
\mathcal{BJ}_{1}^{(a,b,c)}(x)&=\mathcal{J}_{1}^{(3)}(x)+\mathcal{J}_{a+1}^{(3)}(x)j_{1}+\mathcal{J}_{b+1}^{(3)}(x)j_{2}+\mathcal{J}_{c+1}^{(3)}(x)j_{3},\\
\mathcal{BJ}_{2}^{(a,b,c)}(x)&=\mathcal{J}_{2}^{(3)}(x)+\mathcal{J}_{a+2}^{(3)}(x)j_{1}+\mathcal{J}_{b+2}^{(3)}(x)j_{2}+\mathcal{J}_{c+2}^{(3)}(x)j_{3}.
\end{aligned}
\end{equation}
For $a=1$, $b=2$, $c=3$, we obtain the definition of the $n$-th bihyperbolic third-order Jacobsthal polynomial $\mathcal{BJ}_{n}^{(1,2,3)}(x)$, i.e., $\mathcal{BJ}_{n}^{(1,2,3)}(x)=\mathcal{BJ}_{n}(x)$.

By the definition of the generalized bihyperbolic third-order Jacobsthal polynomial we get the following recurrence relations.
\begin{theorem}
Let $n\geq 3$, $a\geq1$, $b\geq1$ and $c\geq1$ integers. Then,
\begin{equation}\label{rec}
\mathcal{BJ}_{n}^{(a,b,c)}(x)=(x-1)\mathcal{BJ}_{n-1}^{(a,b,c)}(x)+(x-1)\mathcal{BJ}_{n-2}^{(a,b,c)}(x)+x\mathcal{BJ}_{n-3}^{(a,b,c)}(x),
\end{equation}
where $\mathcal{BJ}_{0}^{(a,b,c)}(x)$, $\mathcal{BJ}_{1}^{(a,b,c)}(x)$ and $\mathcal{BJ}_{2}^{(a,b,c)}(x)$ are given by Eq. (\ref{e3}).
\end{theorem}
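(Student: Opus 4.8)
The plan is to reduce the claim to the scalar recurrence \eqref{pol} applied coordinate-wise. First I would expand the right-hand side of \eqref{rec} using the definition \eqref{e2} for each of the three terms $\mathcal{BJ}_{n-1}^{(a,b,c)}(x)$, $\mathcal{BJ}_{n-2}^{(a,b,c)}(x)$, $\mathcal{BJ}_{n-3}^{(a,b,c)}(x)$, collecting the result according to the units $1, j_{1}, j_{2}, j_{3}$. This produces four scalar expressions, namely the coefficient of $1$ is
$$(x-1)\mathcal{J}_{n-1}^{(3)}(x)+(x-1)\mathcal{J}_{n-2}^{(3)}(x)+x\mathcal{J}_{n-3}^{(3)}(x),$$
and the coefficients of $j_{1}$, $j_{2}$, $j_{3}$ are the same expression with $n$ replaced by $n+a$, $n+b$, $n+c$ respectively.

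Next I would invoke the defining recurrence \eqref{pol} for the third-order Jacobsthal polynomials. Since $n\geq 3$ and $a,b,c\geq 1$, all indices of the form $m-1, m-2, m-3$ with $m\in\{n,n+a,n+b,n+c\}$ are nonnegative, so \eqref{pol} is applicable in each of the four coordinates. Applying it shows that the four scalar expressions above equal $\mathcal{J}_{n}^{(3)}(x)$, $\mathcal{J}_{n+a}^{(3)}(x)$, $\mathcal{J}_{n+b}^{(3)}(x)$, $\mathcal{J}_{n+c}^{(3)}(x)$ respectively. Reassembling with the units $1, j_{1}, j_{2}, j_{3}$ and using \eqref{e2} once more yields exactly $\mathcal{BJ}_{n}^{(a,b,c)}(x)$, which is the left-hand side of \eqref{rec}.

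There is no real obstacle here; the only point requiring a word of care is the range of $n$: the identity is asserted for $n\geq 3$ precisely so that the scalar recurrence \eqref{pol} may be used legitimately in every coordinate (the smallest index that occurs is $n-3\geq 0$). The values $\mathcal{BJ}_{0}^{(a,b,c)}(x)$, $\mathcal{BJ}_{1}^{(a,b,c)}(x)$, $\mathcal{BJ}_{2}^{(a,b,c)}(x)$ given by \eqref{e3} then serve as the initial conditions, and together with \eqref{rec} they determine the whole sequence $\{\mathcal{BJ}_{n}^{(a,b,c)}(x)\}_{n\geq 0}$.
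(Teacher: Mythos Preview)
Your proposal is correct and follows essentially the same approach as the paper: expand the right-hand side via the definition \eqref{e2}, apply the scalar recurrence \eqref{pol} in each of the four coordinates, and reassemble to obtain $\mathcal{BJ}_{n}^{(a,b,c)}(x)$. Your added remark on why $n\geq 3$ is needed is a nice clarification that the paper leaves implicit.
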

\begin{proof}
Using Eqs. (\ref{e2}) and (\ref{pol}), we have
\begin{align*}
(x-1)&\mathcal{BJ}_{n-1}^{(a,b,c)}(x)+(x-1)\mathcal{BJ}_{n-2}^{(a,b,c)}(x)+x\mathcal{BJ}_{n-3}^{(a,b,c)}(x)\\
&=(x-1)\left(\mathcal{J}_{n-1}^{(3)}(x)+\mathcal{J}_{n+a-1}^{(3)}(x)j_{1}+\mathcal{J}_{n+b-1}^{(3)}(x)j_{2}+\mathcal{J}_{n+c-1}^{(3)}(x)j_{3}\right)\\
&\ \ + (x-1)\left(\mathcal{J}_{n-2}^{(3)}(x)+\mathcal{J}_{n+a-2}^{(3)}(x)j_{1}+\mathcal{J}_{n+b-2}^{(3)}(x)j_{2}+\mathcal{J}_{n+c-2}^{(3)}(x)j_{3}\right)\\
&\ \ + x\left(\mathcal{J}_{n-3}^{(3)}(x)+\mathcal{J}_{n+a-3}^{(3)}(x)j_{1}+\mathcal{J}_{n+b-3}^{(3)}(x)j_{2}+\mathcal{J}_{n+c-3}^{(3)}(x)j_{3}\right)\\
&=\mathcal{J}_{n}^{(3)}(x)+\mathcal{J}_{n+a}^{(3)}(x)j_{1}+\mathcal{J}_{n+b}^{(3)}(x)j_{2}+\mathcal{J}_{n+c}^{(3)}(x)j_{3}\\
&=\mathcal{BJ}_{n}^{(a,b,c)}(x).
\end{align*}
The proof is completed.
\end{proof}

In the proof of the next theorem we will use the following result (see \cite{Ce}).
\begin{lemma}\label{lem1}
For $x$ any variable quantity such that $x^{3}\neq 1$, we have
\begin{equation}
\mathcal{J}_{n+2}^{(3)}(x)=-\mathcal{J}_{n+1}^{(3)}(x)-\mathcal{J}_{n}^{(3)}(x)+x^{n+1}.
\end{equation}
\end{lemma}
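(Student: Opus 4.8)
The plan is to read the identity straight off the Binet formula (\ref{bin1}). Set $S_n(x)=\mathcal{J}_{n+2}^{(3)}(x)+\mathcal{J}_{n+1}^{(3)}(x)+\mathcal{J}_{n}^{(3)}(x)$; the assertion is equivalent to $S_n(x)=x^{n+1}$. Substituting (\ref{bin1}) for each of the three terms and collecting the contributions of the three ``roots'' $x$, $\omega_1$, $\omega_2$ separately, the coefficient of $\tfrac{1}{x^2+x+1}$ becomes $x^{n+3}+x^{n+2}+x^{n+1}=x^{n+1}(x^2+x+1)$, so that block collapses to $x^{n+1}$. Since $\omega_1$ and $\omega_2$ are the roots of $\lambda^2+\lambda+1=0$, we have $\omega_i^{n+3}+\omega_i^{n+2}+\omega_i^{n+1}=\omega_i^{n+1}(\omega_i^2+\omega_i+1)=0$ for $i=1,2$, so the two remaining blocks vanish identically. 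Hence $S_n(x)=x^{n+1}$, and moving $\mathcal{J}_{n+1}^{(3)}(x)+\mathcal{J}_{n}^{(3)}(x)$ to the other side gives the stated formula.

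There is essentially no obstacle here; the only thing worth a line of justification is that every denominator appearing in (\ref{bin1}) is nonzero under the hypothesis $x^3\neq 1$. Indeed $x^2+x+1=0$ would force $x\in\{\omega_1,\omega_2\}$ and hence $x^3=1$, so $x^2+x+1\neq0$; the same observation gives $x\neq\omega_1$ and $x\neq\omega_2$, and of course $\omega_1\neq\omega_2$, so $(x-\omega_1)(x-\omega_2)(\omega_1-\omega_2)\neq0$.

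If one prefers to avoid the Binet formula, the same conclusion follows by a short induction on $n$ directly from the defining recurrence (\ref{pol}). From the initial values $\mathcal{J}_0^{(3)}(x)=0$, $\mathcal{J}_1^{(3)}(x)=1$, $\mathcal{J}_2^{(3)}(x)=x-1$ and (\ref{pol}) one computes $S_0(x)=x$, $S_1(x)=x^2$ and $S_2(x)=x^3$; then for $n\geq3$, applying (\ref{pol}) to each of $\mathcal{J}_{n+2}^{(3)}(x)$, $\mathcal{J}_{n+1}^{(3)}(x)$, $\mathcal{J}_{n}^{(3)}(x)$ and regrouping yields $S_n(x)=(x-1)S_{n-1}(x)+(x-1)S_{n-2}(x)+xS_{n-3}(x)$, and the inductive hypothesis turns the right-hand side into $(x-1)x^{n}+(x-1)x^{n-1}+x\cdot x^{n-2}=x^{n+1}$, which closes the induction.
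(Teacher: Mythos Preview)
Your argument is correct. Both routes you give---the Binet-formula computation and the direct induction on $n$ via the recurrence (\ref{pol})---are valid and complete; the small remark about the nonvanishing of the denominators under $x^3\neq1$ is a nice touch.

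Note, however, that the paper does not supply its own proof of this lemma: it simply quotes the result from \cite{Ce}. So there is no ``paper's proof'' to compare against here; you have in fact supplied more than the paper does. Of your two approaches, the Binet-formula one is the natural companion to the surrounding material (the paper leans on (\ref{bin1}) repeatedly), while the induction avoids any appeal to closed forms and works purely from the defining recurrence and initial data. Either would serve as a self-contained replacement for the citation.
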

\begin{theorem}
Let $n\geq 0$, $a\geq1$, $b\geq1$, $c\geq1$ be integers. Then,
$$\mathcal{BJ}_{n+2}^{(a,b,c)}(x)+\mathcal{BJ}_{n+1}^{(a,b,c)}(x)+\mathcal{BJ}_{n}^{(a,b,c)}(x)=x^{n+1}\left(1+x^{a}j_{1}+x^{b}j_{2}+x^{c}j_{3}\right).$$
\end{theorem}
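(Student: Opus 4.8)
The plan is to expand the left-hand side using the defining relation \eqref{e2} and collect the four components (the real part and the coefficients of $j_1$, $j_2$, $j_3$) separately. Concretely, $\mathcal{BJ}_{n+2}^{(a,b,c)}(x)+\mathcal{BJ}_{n+1}^{(a,b,c)}(x)+\mathcal{BJ}_{n}^{(a,b,c)}(x)$ equals
\[
\left(\mathcal{J}_{n+2}^{(3)}(x)+\mathcal{J}_{n+1}^{(3)}(x)+\mathcal{J}_{n}^{(3)}(x)\right)
+\sum_{d\in\{a,b,c\}}\left(\mathcal{J}_{n+d+2}^{(3)}(x)+\mathcal{J}_{n+d+1}^{(3)}(x)+\mathcal{J}_{n+d}^{(3)}(x)\right)j_{\ast},
\]
with the obvious correspondence $a\leftrightarrow j_1$, $b\leftrightarrow j_2$, $c\leftrightarrow j_3$. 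So the entire statement reduces to the scalar identity $\mathcal{J}_{m+2}^{(3)}(x)+\mathcal{J}_{m+1}^{(3)}(x)+\mathcal{J}_{m}^{(3)}(x)=x^{m+1}$ for every integer $m\ge 0$, applied with $m=n$ and with $m=n+d$ for $d\in\{a,b,c\}$.

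That scalar identity is exactly Lemma \ref{lem1}: rearranging $\mathcal{J}_{m+2}^{(3)}(x)=-\mathcal{J}_{m+1}^{(3)}(x)-\mathcal{J}_{m}^{(3)}(x)+x^{m+1}$ gives $\mathcal{J}_{m+2}^{(3)}(x)+\mathcal{J}_{m+1}^{(3)}(x)+\mathcal{J}_{m}^{(3)}(x)=x^{m+1}$. Hence the real part of the left-hand side is $x^{n+1}$, the $j_1$-coefficient is $x^{n+a+1}=x^{n+1}x^a$, the $j_2$-coefficient is $x^{n+1}x^b$, and the $j_3$-coefficient is $x^{n+1}x^c$. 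Factoring out $x^{n+1}$ yields $x^{n+1}(1+x^a j_1+x^b j_2+x^c j_3)$, which is the claimed right-hand side.

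The argument is essentially bookkeeping, so there is no real obstacle; the only point requiring a word of care is that Lemma \ref{lem1} must be invoked four times with four different shifts (indices $n$, $n+a$, $n+b$, $n+c$), all of which are $\ge 0$ since $n\ge 0$ and $a,b,c\ge 1$, so the lemma applies in each case. One could alternatively note that the identity is really a property of the polynomial sequence alone and then "tensor up" to $\mathcal{H}_2$ by linearity, but writing out the four components explicitly is the cleanest presentation and matches the style of the preceding proof. I would present it as a short displayed computation: substitute \eqref{e2}, group by component, apply Lemma \ref{lem1} to each group, and factor.
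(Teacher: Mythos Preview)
Your proposal is correct and follows essentially the same approach as the paper: expand using the definition \eqref{e2}, group the four components, apply Lemma~\ref{lem1} to each grouped sum, and factor out $x^{n+1}$. The paper writes the expansion out term by term rather than using your summation shorthand, but the argument is identical.
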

\begin{proof}
By the equality (\ref{e2}) and Lemma \ref{lem1}, we have
\begin{align*}
\mathcal{BJ}_{n+2}^{(a,b,c)}(x)&+\mathcal{BJ}_{n+1}^{(a,b,c)}(x)+\mathcal{BJ}_{n}^{(a,b,c)}(x)\\
&=\mathcal{J}_{n+2}^{(3)}(x)+\mathcal{J}_{n+a+2}^{(3)}(x)j_{1}+\mathcal{J}_{n+b+2}^{(3)}(x)j_{2}+\mathcal{J}_{n+c+2}^{(3)}(x)j_{3}\\
&\ \ + \mathcal{J}_{n+1}^{(3)}(x)+\mathcal{J}_{n+a+1}^{(3)}(x)j_{1}+\mathcal{J}_{n+b+1}^{(3)}(x)j_{2}+\mathcal{J}_{n+c+1}^{(3)}(x)j_{3}\\
&\ \ + \mathcal{J}_{n}^{(3)}(x)+\mathcal{J}_{n+a}^{(3)}(x)j_{1}+\mathcal{J}_{n+b}^{(3)}(x)j_{2}+\mathcal{J}_{n+c}^{(3)}(x)j_{3}\\
&=\mathcal{J}_{n+2}^{(3)}(x)+\mathcal{J}_{n+1}^{(3)}(x)+\mathcal{J}_{n}^{(3)}(x)\\
&\ \ + \left(\mathcal{J}_{n+a+2}^{(3)}(x)+\mathcal{J}_{n+a+1}^{(3)}(x)+\mathcal{J}_{n+a}^{(3)}(x)\right)j_{1}\\
&\ \ + \left(\mathcal{J}_{n+b+2}^{(3)}(x)+\mathcal{J}_{n+b+1}^{(3)}(x)+\mathcal{J}_{n+b}^{(3)}(x)\right)j_{2}\\
&\ \ + \left(\mathcal{J}_{n+c+2}^{(3)}(x)+\mathcal{J}_{n+c+1}^{(3)}(x)+\mathcal{J}_{n+c}^{(3)}(x)\right)j_{1}\\
&=x^{n+1}\left(1+x^{a}j_{1}+x^{b}j_{2}+x^{c}j_{3}\right).
\end{align*}
\end{proof}

Now, we give the Binet formula for the generalized bihyperbolic third-order Jacobsthal polynomials.
\begin{theorem}[Binet formula for the sequence $\mathcal{BJ}_{n}^{(a,b,c)}(x)$]\label{bb}
Let $n\geq 0$, $a\geq1$, $b\geq1$, $c\geq1$ be integers. Then,
$$
\mathcal{BJ}_{n}^{(a,b,c)}(x)=\frac{x^{n+1}\Theta}{x^{2}+x+1}-\frac{\omega_{1}^{n+1}\Phi_{1}}{(x-\omega_{1})(\omega_{1}-\omega_{2})}+\frac{\omega_{2}^{n+1}\Phi_{2}}{(x-\omega_{2})(\omega_{1}-\omega_{2})},
$$
where $\Theta=1+x^{a}j_{1}+x^{b}j_{2}+x^{c}j_{3}$, $\Phi_{1}=1+\omega_{1}^{a}j_{1}+\omega_{1}^{b}j_{2}+\omega_{1}^{c}j_{3}$ and $\Phi_{2}=1+\omega_{2}^{a}j_{1}+\omega_{2}^{b}j_{2}+\omega_{2}^{c}j_{3}$.
\end{theorem}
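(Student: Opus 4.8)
The plan is to derive the formula directly from the definition (\ref{e2}) together with the scalar Binet formula (\ref{bin1}), rather than by an induction on the recurrence (\ref{rec}); the latter would also work but requires checking three initial cases and is more laborious. The key structural observation is that the three quantities $x^{2}+x+1$, $(x-\omega_{1})(\omega_{1}-\omega_{2})$ and $(x-\omega_{2})(\omega_{1}-\omega_{2})$ are ordinary real (or polynomial) scalars, hence commute with the operators $j_{1},j_{2},j_{3}$ of Eq. (\ref{e1}); this is what allows us to pull common denominators through the four components of $\mathcal{BJ}_{n}^{(a,b,c)}(x)$.

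First I would substitute (\ref{bin1}) into each of the four summands of (\ref{e2}). For instance, the $j_{1}$-component contributes
$$\mathcal{J}_{n+a}^{(3)}(x)\,j_{1}=\left(\frac{x^{n+a+1}}{x^{2}+x+1}-\frac{\omega_{1}^{n+a+1}}{(x-\omega_{1})(\omega_{1}-\omega_{2})}+\frac{\omega_{2}^{n+a+1}}{(x-\omega_{2})(\omega_{1}-\omega_{2})}\right)j_{1},$$
and similarly for the $j_{2}$- and $j_{3}$-components with $b$ and $c$ in place of $a$; the scalar part $\mathcal{J}_{n}^{(3)}(x)$ contributes (\ref{bin1}) itself. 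Next I would regroup the resulting twelve terms into three blocks according to whether the numerator is a power of $x$, of $\omega_{1}$, or of $\omega_{2}$, using $x^{n+a+1}=x^{n+1}x^{a}$, $\omega_{1}^{n+a+1}=\omega_{1}^{n+1}\omega_{1}^{a}$, $\omega_{2}^{n+a+1}=\omega_{2}^{n+1}\omega_{2}^{a}$, and likewise with $b,c$.

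Finally, in the first block I would factor out $x^{n+1}/(x^{2}+x+1)$, which leaves the coefficient $1+x^{a}j_{1}+x^{b}j_{2}+x^{c}j_{3}=\Theta$; in the second block factor out $-\,\omega_{1}^{n+1}/((x-\omega_{1})(\omega_{1}-\omega_{2}))$, leaving $1+\omega_{1}^{a}j_{1}+\omega_{1}^{b}j_{2}+\omega_{1}^{c}j_{3}=\Phi_{1}$; and in the third block factor out $\omega_{2}^{n+1}/((x-\omega_{2})(\omega_{1}-\omega_{2}))$, leaving $\Phi_{2}$. Adding the three blocks yields precisely the asserted expression. There is no genuine obstacle here; the only point that needs a little care is the sign bookkeeping and keeping track of which shifted power splits off $x^{a}$, $\omega_{1}^{a}$, $\omega_{2}^{a}$ (and likewise for $b$, $c$). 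Note also that the hypothesis $x^{3}\neq1$, equivalently $x\neq\omega_{1},\omega_{2}$ together with $x^{2}+x+1\neq0$, is exactly what guarantees that all three denominators are nonzero, so the formula is well defined.
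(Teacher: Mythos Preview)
Your proposal is correct and follows essentially the same route as the paper's own proof: substitute the scalar Binet formula (\ref{bin1}) into each of the four components of the definition (\ref{e2}), then regroup the twelve terms according to whether the numerator involves a power of $x$, $\omega_{1}$, or $\omega_{2}$, and factor to recover $\Theta$, $\Phi_{1}$, $\Phi_{2}$. Your added remarks on why the scalar denominators commute with the $j_{i}$ and on the role of the hypothesis $x^{3}\neq1$ are sound but not strictly needed, since the paper treats these points as understood.
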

\begin{proof}
By the formulas (\ref{e2}) and (\ref{bin1}) we get
\begin{align*}
\mathcal{BJ}_{n}^{(a,b,c)}(x)&=\mathcal{J}_{n}^{(3)}(x)+\mathcal{J}_{n+a}^{(3)}(x)j_{1}+\mathcal{J}_{n+b}^{(3)}(x)j_{2}+\mathcal{J}_{n+c}^{(3)}(x)j_{3}\\
&=\frac{x^{n+1}}{x^{2}+x+1}-\frac{\omega_{1}^{n+1}}{(x-\omega_{1})(\omega_{1}-\omega_{2})}+\frac{\omega_{2}^{n+1}}{(x-\omega_{2})(\omega_{1}-\omega_{2})}\\
&\ \ + \frac{x^{n+a+1}j_{1}}{x^{2}+x+1}-\frac{\omega_{1}^{n+a+1}j_{1}}{(x-\omega_{1})(\omega_{1}-\omega_{2})}+\frac{\omega_{2}^{n+a+1}j_{1}}{(x-\omega_{2})(\omega_{1}-\omega_{2})}\\
&\ \ + \frac{x^{n+b+1}j_{2}}{x^{2}+x+1}-\frac{\omega_{1}^{n+b+1}j_{2}}{(x-\omega_{1})(\omega_{1}-\omega_{2})}+\frac{\omega_{2}^{n+b+1}j_{2}}{(x-\omega_{2})(\omega_{1}-\omega_{2})}\\
&\ \ + \frac{x^{n+c+1}j_{3}}{x^{2}+x+1}-\frac{\omega_{1}^{n+c+1}j_{3}}{(x-\omega_{1})(\omega_{1}-\omega_{2})}+\frac{\omega_{2}^{n+c+1}j_{3}}{(x-\omega_{2})(\omega_{1}-\omega_{2})}\\
&=\frac{x^{n+1}\Theta}{x^{2}+x+1}-\frac{\omega_{1}^{n+1}\Phi_{1}}{(x-\omega_{1})(\omega_{1}-\omega_{2})}+\frac{\omega_{2}^{n+1}\Phi_{2}}{(x-\omega_{2})(\omega_{1}-\omega_{2})},
\end{align*}
where $\Theta=1+x^{a}j_{1}+x^{b}j_{2}+x^{c}j_{3}$, $\Phi_{1}=1+\omega_{1}^{a}j_{1}+\omega_{1}^{b}j_{2}+\omega_{1}^{c}j_{3}$ and $\Phi_{2}=1+\omega_{2}^{a}j_{1}+\omega_{2}^{b}j_{2}+\omega_{2}^{c}j_{3}$.
\end{proof}

By Theorem \ref{bb}, we get the Binet formula for the bihyperbolic third-order Jacobsthal polynomials.
\begin{corollary}
Let $n\geq 0$ be an integer. Then, 
\begin{align*}
\mathcal{BJ}_{n}(x)&=\frac{x^{n+1}(1+xj_{1}+x^{2}j_{2}+x^{3}j_{3})}{x^{2}+x+1}\\
&\ \ - \frac{\omega_{1}^{n+1}(1+\omega_{1}j_{1}+\omega_{2}j_{2}+j_{3})}{(x-\omega_{1})(\omega_{1}-\omega_{2})}\\
&\ \ + \frac{\omega_{2}^{n+1}(1+\omega_{2}j_{1}+\omega_{1}j_{2}+j_{3})}{(x-\omega_{2})(\omega_{1}-\omega_{2})}.
\end{align*}
\end{corollary}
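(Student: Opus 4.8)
The plan is to read off the statement as the specialization $a=1$, $b=2$, $c=3$ of the Binet formula in Theorem \ref{bb}, followed by a short simplification of the powers of the cube roots $\omega_{1}$, $\omega_{2}$. Since by definition $\mathcal{BJ}_{n}(x)=\mathcal{BJ}_{n}^{(1,2,3)}(x)$, putting $(a,b,c)=(1,2,3)$ in Theorem \ref{bb} gives at once
\begin{equation*}
\mathcal{BJ}_{n}(x)=\frac{x^{n+1}\Theta}{x^{2}+x+1}-\frac{\omega_{1}^{n+1}\Phi_{1}}{(x-\omega_{1})(\omega_{1}-\omega_{2})}+\frac{\omega_{2}^{n+1}\Phi_{2}}{(x-\omega_{2})(\omega_{1}-\omega_{2})},
\end{equation*}
where $\Theta=1+xj_{1}+x^{2}j_{2}+x^{3}j_{3}$, $\Phi_{1}=1+\omega_{1}j_{1}+\omega_{1}^{2}j_{2}+\omega_{1}^{3}j_{3}$ and $\Phi_{2}=1+\omega_{2}j_{1}+\omega_{2}^{2}j_{2}+\omega_{2}^{3}j_{3}$.

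Next I would reduce the exponents on $\omega_{1}$ and $\omega_{2}$. Because $\omega_{1}$ and $\omega_{2}$ solve $\lambda^{2}+\lambda+1=0$, we have $\omega_{1}\omega_{2}=1$ and, from $\lambda^{3}-1=(\lambda-1)(\lambda^{2}+\lambda+1)$ together with $\omega_{i}\neq 1$, also $\omega_{1}^{3}=\omega_{2}^{3}=1$; hence $\omega_{1}^{2}=\omega_{1}^{-1}=\omega_{2}$ and $\omega_{2}^{2}=\omega_{1}$. Substituting these into $\Phi_{1}$ and $\Phi_{2}$ turns them into $\Phi_{1}=1+\omega_{1}j_{1}+\omega_{2}j_{2}+j_{3}$ and $\Phi_{2}=1+\omega_{2}j_{1}+\omega_{1}j_{2}+j_{3}$, and inserting these back into the displayed formula produces precisely the asserted three-term expression for $\mathcal{BJ}_{n}(x)$.

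I do not anticipate any genuine difficulty: the corollary is a pure specialization of Theorem \ref{bb} plus the standard identities $\omega_{i}^{3}=1$, $\omega_{1}^{2}=\omega_{2}$, $\omega_{2}^{2}=\omega_{1}$ for primitive cube roots of unity. The only point worth being careful about is justifying that the factor $\lambda-1$ may be dropped when passing from $\lambda^{3}-1=0$ to $\lambda^{2}+\lambda+1=0$, i.e. that neither $\omega_{1}$ nor $\omega_{2}$ equals $1$, which is immediate since $1$ does not satisfy $\lambda^{2}+\lambda+1=0$.
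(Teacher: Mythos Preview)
Your proposal is correct and follows exactly the paper's approach: the corollary is stated immediately after Theorem \ref{bb} with only the remark ``By Theorem \ref{bb}, we get the Binet formula for the bihyperbolic third-order Jacobsthal polynomials,'' i.e.\ it is just the specialization $(a,b,c)=(1,2,3)$. Your added justification that $\omega_{1}^{2}=\omega_{2}$, $\omega_{2}^{2}=\omega_{1}$, $\omega_{1}^{3}=\omega_{2}^{3}=1$ is more detail than the paper provides, but it is precisely what is needed to pass from $\Phi_{i}=1+\omega_{i}j_{1}+\omega_{i}^{2}j_{2}+\omega_{i}^{3}j_{3}$ to the displayed form.
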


For simplicity of notation, let
\begin{equation}\label{sam}
\mathcal{Z}_{n}(x)=\frac{1}{\omega_{1}-\omega_{2}}\left((\omega_{1}x-1)\omega_{1}^{n}\Phi_{1}-(\omega_{2}x-1)\omega_{2}^{n}\Phi_{2}\right).
\end{equation}
Then, the Binet formula of the generalized bihyperbolic third-order Jacobsthal polynomials is given by
\begin{equation}\label{sim}
\mathcal{BJ}_{n}^{(a,b,c)}(x)=\frac{1}{x^{2}+x+1}\left(x^{n+1}\Theta-\mathcal{Z}_{n}(x)\right).
\end{equation}
Note that $\mathcal{Z}_{n}(x)+\mathcal{Z}_{n+1}(x)+\mathcal{Z}_{n+2}(x)=0$.

Assume that $a\geq 1$,  $b\geq 1$, $c\geq 1$ are integers. The Vadja's identity for the sequence $\mathcal{Z}_{n}(x)$ and generalized bihyperbolic third-order Jacobsthal polynomials is given in the next theorem.
\begin{theorem}\label{pp}
Let $n\geq 0$,  $p\geq 0$, $q\geq 0$ be integers. Then, we have
\begin{equation}\label{t1}
\mathcal{Z}_{n+p}(x)\mathcal{Z}_{n+q}(x)-\mathcal{Z}_{n}(x)\mathcal{Z}_{n+p+q}(x)=(x^{2}+x+1)\mathcal{A}_{p}\mathcal{A}_{q}\Phi_{1}\Phi_{2},
\end{equation}
\begin{equation}\label{t2}
\begin{aligned}
\mathcal{BJ}_{n+p}^{(a,b,c)}(x)&\mathcal{BJ}_{n+q}^{(a,b,c)}(x)-\mathcal{BJ}_{n}^{(a,b,c)}(x)\mathcal{BJ}_{n+p+q}^{(a,b,c)}(x)\\
&=\frac{1}{(x^{2}+x+1)^{2}}\left\lbrace \begin{array}{cc} 
(x^{2}+x+1)\mathcal{A}_{p}\mathcal{A}_{q}\Phi_{1}\Phi_{2}\\
+ x^{n+1}\Theta \left(\mathcal{B}_{n+q}(p)-x^{q}\mathcal{B}_{n}(p)\right) 
\end{array} \right\rbrace,
\end{aligned}
\end{equation}
where $\mathcal{A}_{n}=\frac{\omega_{1}^{n}-\omega_{2}^{n}}{\omega_{1}-\omega_{2}}$ and $\mathcal{B}_{n}(p)=\mathcal{Z}_{n+p}(x)-x^{p}\mathcal{Z}_{n}(x)$.
\end{theorem}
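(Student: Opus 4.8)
The plan is to establish \eqref{t1} first by direct substitution of the Binet-type formula \eqref{sam}, and then to obtain \eqref{t2} from \eqref{t1} together with the representation \eqref{sim}. Throughout I use that $\mathcal{H}_{2}$ is a commutative ring, so that $\Theta$, $\Phi_{1}$, $\Phi_{2}$ may be manipulated exactly as scalars, and that $\omega_{1}+\omega_{2}=-1$, $\omega_{1}\omega_{2}=1$; in particular $\omega_{1}^{m}\omega_{2}^{m}=1$ for every $m$ and $(\omega_{1}x-1)(\omega_{2}x-1)=\omega_{1}\omega_{2}x^{2}-(\omega_{1}+\omega_{2})x+1=x^{2}+x+1$.

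For \eqref{t1}, write $\mathcal{Z}_{m}(x)=\frac{1}{\omega_{1}-\omega_{2}}((\omega_{1}x-1)\omega_{1}^{m}\Phi_{1}-(\omega_{2}x-1)\omega_{2}^{m}\Phi_{2})$ for $m\in\{n,n+p,n+q,n+p+q\}$ and expand the two products $\mathcal{Z}_{n+p}(x)\mathcal{Z}_{n+q}(x)$ and $\mathcal{Z}_{n}(x)\mathcal{Z}_{n+p+q}(x)$. In each product the $\Phi_{1}^{2}$ term carries the factor $(\omega_{1}x-1)^{2}\omega_{1}^{2n+p+q}$ and the $\Phi_{2}^{2}$ term carries $(\omega_{2}x-1)^{2}\omega_{2}^{2n+p+q}$, so these cancel in the difference and only the mixed $\Phi_{1}\Phi_{2}$ terms survive. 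Collecting them and using $\omega_{1}^{n}\omega_{2}^{n}=1$ to reduce $\omega_{1}^{n+p}\omega_{2}^{n+q}=\omega_{1}^{p}\omega_{2}^{q}$, $\omega_{2}^{n+p}\omega_{1}^{n+q}=\omega_{2}^{p}\omega_{1}^{q}$, $\omega_{1}^{n}\omega_{2}^{n+p+q}=\omega_{2}^{p+q}$ and $\omega_{2}^{n}\omega_{1}^{n+p+q}=\omega_{1}^{p+q}$, the factor multiplying $-\frac{(\omega_{1}x-1)(\omega_{2}x-1)}{(\omega_{1}-\omega_{2})^{2}}\Phi_{1}\Phi_{2}$ becomes $\omega_{1}^{p}\omega_{2}^{q}+\omega_{2}^{p}\omega_{1}^{q}-\omega_{1}^{p+q}-\omega_{2}^{p+q}=-(\omega_{1}^{p}-\omega_{2}^{p})(\omega_{1}^{q}-\omega_{2}^{q})=-(\omega_{1}-\omega_{2})^{2}\mathcal{A}_{p}\mathcal{A}_{q}$. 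The two minus signs cancel, $(\omega_{1}x-1)(\omega_{2}x-1)=x^{2}+x+1$, and we arrive at the right-hand side of \eqref{t1}.

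For \eqref{t2}, substitute \eqref{sim} and expand each product $\mathcal{BJ}_{r}^{(a,b,c)}(x)\mathcal{BJ}_{s}^{(a,b,c)}(x)=\frac{1}{(x^{2}+x+1)^{2}}(x^{r+s+2}\Theta^{2}-x^{r+1}\Theta\mathcal{Z}_{s}(x)-x^{s+1}\Theta\mathcal{Z}_{r}(x)+\mathcal{Z}_{r}(x)\mathcal{Z}_{s}(x))$. Taking $(r,s)=(n+p,n+q)$ and $(r,s)=(n,n+p+q)$ and subtracting, the $\Theta^{2}$ terms cancel since both exponents equal $2n+p+q+2$; the $\mathcal{Z}$--$\mathcal{Z}$ terms give $\mathcal{Z}_{n+p}(x)\mathcal{Z}_{n+q}(x)-\mathcal{Z}_{n}(x)\mathcal{Z}_{n+p+q}(x)$, which equals $(x^{2}+x+1)\mathcal{A}_{p}\mathcal{A}_{q}\Phi_{1}\Phi_{2}$ by \eqref{t1}; and the remaining $\Theta$-terms combine to $x^{n+1}\Theta(\mathcal{Z}_{n+p+q}(x)-x^{p}\mathcal{Z}_{n+q}(x)-x^{q}\mathcal{Z}_{n+p}(x)+x^{p+q}\mathcal{Z}_{n}(x))$. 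Since $\mathcal{Z}_{n+p+q}(x)-x^{p}\mathcal{Z}_{n+q}(x)=\mathcal{B}_{n+q}(p)$ and $x^{q}(\mathcal{Z}_{n+p}(x)-x^{p}\mathcal{Z}_{n}(x))=x^{q}\mathcal{B}_{n}(p)$, this last factor is $\mathcal{B}_{n+q}(p)-x^{q}\mathcal{B}_{n}(p)$, which yields \eqref{t2} after collecting over the common denominator $(x^{2}+x+1)^{2}$.

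The only delicate point is the bookkeeping in the second paragraph: tracking which powers of $\omega_{1}$ and $\omega_{2}$ appear in each of the four terms of each product, and checking that the $\Phi_{1}^{2}$ and $\Phi_{2}^{2}$ contributions cancel while the $\Phi_{1}\Phi_{2}$ contributions assemble into $(\omega_{1}-\omega_{2})^{2}\mathcal{A}_{p}\mathcal{A}_{q}$. Once \eqref{t1} is in hand, \eqref{t2} is a routine expansion using the definition of $\mathcal{B}_{n}(p)$.
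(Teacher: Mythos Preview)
Your proof is correct and follows essentially the same approach as the paper: for \eqref{t1} you expand $\mathcal{Z}_{m}(x)$ via \eqref{sam}, observe that the $\Phi_{1}^{2}$ and $\Phi_{2}^{2}$ contributions cancel, and reduce the mixed terms using $\omega_{1}\omega_{2}=1$ and $(\omega_{1}x-1)(\omega_{2}x-1)=x^{2}+x+1$; for \eqref{t2} you substitute \eqref{sim}, cancel the $\Theta^{2}$ terms, invoke \eqref{t1}, and regroup the cross terms into $\mathcal{B}_{n+q}(p)-x^{q}\mathcal{B}_{n}(p)$. Your write-up is in fact somewhat more explicit than the paper's about the cancellations and the role of commutativity of $\mathcal{H}_{2}$, but the underlying argument is the same.
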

\begin{proof}
(\ref{t1}): Using Eq. (\ref{sam}), $A=\omega_{1}x-1$ and $B=\omega_{2}x-1$, we have
\begin{align*}
(\omega_{1}-\omega_{2})^{2}&\left[\mathcal{Z}_{n+p}(x)\mathcal{Z}_{n+q}(x)-\mathcal{Z}_{n}(x)\mathcal{Z}_{n+p+q}(x)\right]\\
&=\left(A\omega_{1}^{n+p}\Phi_{1}-B\omega_{2}^{n+p}\Phi_{2}\right)\left(A\omega_{1}^{n+q}\Phi_{1}-B\omega_{2}^{n+q}\Phi_{2}\right)\\
&\ \ - \left(A\omega_{1}^{n}\Phi_{1}-B\omega_{2}^{n}\Phi_{2}\right)\left(A\omega_{1}^{n+p+q}\Phi_{1}-B\omega_{2}^{n+p+q}\Phi_{2}\right)\\
&=AB(\omega_{1}^{p}-\omega_{2}^{p})\left(\omega_{1}^{q}\Phi_{2}\Phi_{1}-\omega_{2}^{q}\Phi_{1}\Phi_{2}\right)\\
&=(\omega_{1}-\omega_{2})^{2}(x^{2}+x+1)\mathcal{A}_{p}\mathcal{A}_{q}\Phi_{1}\Phi_{2},
\end{align*}
where $\mathcal{A}_{n}=\frac{\omega_{1}^{n}-\omega_{2}^{n}}{\omega_{1}-\omega_{2}}$.

By formula (\ref{sim}) and Eq. (\ref{t1}), we get
\begin{align*}
(x^{2}+x+1)^{2}&\left[\mathcal{BJ}_{n+p}^{(a,b,c)}(x)\mathcal{BJ}_{n+q}^{(a,b,c)}(x)-\mathcal{BJ}_{n}^{(a,b,c)}(x)\mathcal{BJ}_{n+p+q}^{(a,b,c)}(x)\right]\\
&=\left(x^{n+p+1}\Theta-\mathcal{Z}_{n+p}(x)\right)\left(x^{n+q+1}\Theta-\mathcal{Z}_{n+q}(x)\right)\\
&\ \ - \left(x^{n+1}\Theta-\mathcal{Z}_{n}(x)\right)\left(x^{n+p+q+1}\Theta-\mathcal{Z}_{n+p+q}(x)\right)\\
&=\mathcal{Z}_{n+p}(x)\mathcal{Z}_{n+q}(x)-\mathcal{Z}_{n}(x)\mathcal{Z}_{n+p+q}(x)\\
&\ \ + x^{n+1}\Theta \left(\mathcal{B}_{n+q}(p)-x^{q}\mathcal{B}_{n}(p)\right)\\
&=(x^{2}+x+1)\mathcal{A}_{p}\mathcal{A}_{q}\Phi_{1}\Phi_{2}+ x^{n+1}\Theta \left(\mathcal{B}_{n+q}(p)-x^{q}\mathcal{B}_{n}(p)\right),
\end{align*}
where $\mathcal{B}_{n}(p)=\mathcal{Z}_{n+p}(x)-x^{p}\mathcal{Z}_{n}(x)$.
\end{proof}

It is easily seen that for special values of $p$ and $q$ by Theorem \ref{pp}, we get new identities for generalized bihyperbolic third-order Jacobsthal polynomials:
\begin{itemize}
\item Catalan's identity: $q=-p$.
\item Cassini's identity: $p=1$, $q=-1$.
\item d'Ocagne's identity: $p=1$, $q=m-n$, with $m\geq n$.
\end{itemize}

\begin{corollary}
Catalan identity for generalized bihyperbolic third-order Jacobsthal polynomials. Let $n\geq 0$, $p\geq 0$ be integers such that $n\geq p$. Then
\begin{equation}\label{c1}
\begin{aligned}
\mathcal{BJ}_{n+p}^{(a,b,c)}(x)&\mathcal{BJ}_{n-p}^{(a,b,c)}(x)-\left(\mathcal{BJ}_{n}^{(a,b,c)}(x)\right)^{2}\\
&=\frac{1}{(x^{2}+x+1)^{2}}\left\lbrace \begin{array}{cc} 
-(x^{2}+x+1)\mathcal{A}_{p}^{2}\Phi_{1}\Phi_{2}\\
+ x^{n+1}\Theta \left(\mathcal{B}_{n-p}(p)-x^{-p}\mathcal{B}_{n}(p)\right) 
\end{array} \right\rbrace.
\end{aligned}
\end{equation}
\end{corollary}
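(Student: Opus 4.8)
The plan is to derive \eqref{c1} as the special case $q=-p$ of the Vajda identity \eqref{t2}. First I would note that both sides of \eqref{t2} make sense for negative values of the parameter $q$: the sequence $\mathcal{Z}_{n}(x)$ is given by the closed form \eqref{sam} in terms of $\omega_{1}^{n}$ and $\omega_{2}^{n}$, which is meaningful for every integer exponent since $\omega_{1},\omega_{2}\neq 0$, and hence $\mathcal{BJ}_{n}^{(a,b,c)}(x)$ is defined by \eqref{sim} for all integers. Consequently the computation carried out in the proof of Theorem \ref{pp} remains valid verbatim with $q$ replaced by $-p$, the hypothesis $n\geq p$ ensuring that the indices $n-p$ and $n+q$ are nonnegative where this is needed.

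Next I would substitute $q=-p$ into \eqref{t2} and simplify each piece. On the left-hand side one has $\mathcal{BJ}_{n+q}^{(a,b,c)}(x)=\mathcal{BJ}_{n-p}^{(a,b,c)}(x)$ and $\mathcal{BJ}_{n+p+q}^{(a,b,c)}(x)=\mathcal{BJ}_{n}^{(a,b,c)}(x)$, so the left-hand side collapses to $\mathcal{BJ}_{n+p}^{(a,b,c)}(x)\mathcal{BJ}_{n-p}^{(a,b,c)}(x)-\left(\mathcal{BJ}_{n}^{(a,b,c)}(x)\right)^{2}$, which is exactly the left-hand side of \eqref{c1}. On the right-hand side the factor $\mathcal{B}_{n+q}(p)-x^{q}\mathcal{B}_{n}(p)$ becomes $\mathcal{B}_{n-p}(p)-x^{-p}\mathcal{B}_{n}(p)$, matching the bracketed expression in \eqref{c1} with $\mathcal{B}_{n}(p)=\mathcal{Z}_{n+p}(x)-x^{p}\mathcal{Z}_{n}(x)$.

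The only genuine simplification concerns the coefficient $\mathcal{A}_{p}\mathcal{A}_{q}$. Using $\omega_{1}\omega_{2}=1$, so that $\omega_{1}^{-1}=\omega_{2}$ and $\omega_{2}^{-1}=\omega_{1}$, I get
$$\mathcal{A}_{-p}=\frac{\omega_{1}^{-p}-\omega_{2}^{-p}}{\omega_{1}-\omega_{2}}=\frac{\omega_{2}^{p}-\omega_{1}^{p}}{\omega_{1}-\omega_{2}}=-\mathcal{A}_{p},$$
hence $\mathcal{A}_{p}\mathcal{A}_{-p}=-\mathcal{A}_{p}^{2}$, which turns the first summand on the right-hand side of \eqref{t2} into $-(x^{2}+x+1)\mathcal{A}_{p}^{2}\Phi_{1}\Phi_{2}$. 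Assembling the three simplified pieces over the common denominator $(x^{2}+x+1)^{2}$ reproduces \eqref{c1} term by term. I do not expect any serious obstacle: the computation is a direct specialization, and the single point requiring attention is the well-definedness of $\mathcal{Z}_{n}(x)$ and $\mathcal{BJ}_{n}^{(a,b,c)}(x)$ at the shifted (and possibly small) indices, which is taken care of by the extension remark in the first paragraph.
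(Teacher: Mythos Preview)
Your proposal is correct and follows exactly the approach indicated in the paper: the Catalan identity is obtained from the Vajda identity \eqref{t2} by the specialization $q=-p$, and your computation $\mathcal{A}_{-p}=-\mathcal{A}_{p}$ (using $\omega_{1}\omega_{2}=1$) is the one nontrivial simplification needed. The paper itself does not spell out a separate proof of this corollary beyond pointing to this substitution, so your write-up is in fact more detailed than the original.
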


\begin{corollary}
Cassini identity for generalized bihyperbolic third-order Jacobsthal polynomials. Let $n\geq 1$ be an integer. Then
\begin{equation}\label{c2}
\begin{aligned}
\mathcal{BJ}_{n+1}^{(a,b,c)}(x)&\mathcal{BJ}_{n-1}^{(a,b,c)}(x)-\left(\mathcal{BJ}_{n}^{(a,b,c)}(x)\right)^{2}\\
&=\frac{1}{(x^{2}+x+1)^{2}}\left\lbrace \begin{array}{cc} 
-(x^{2}+x+1)\Phi_{1}\Phi_{2}\\
+ x^{n+1}\Theta \left(\mathcal{B}_{n-1}(1)-x^{-1}\mathcal{B}_{n}(1)\right) 
\end{array} \right\rbrace.
\end{aligned}
\end{equation}
\end{corollary}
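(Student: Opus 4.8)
The plan is to derive (\ref{c2}) directly from the Vajda-type identity (\ref{t2}) of Theorem~\ref{pp} by specializing $p=1$ and $q=-1$ (equivalently, it is the case $p=1$ of the Catalan identity (\ref{c1})). First I would record that, although Theorem~\ref{pp} is phrased for $p,q\ge 0$, its proof invokes only the closed form (\ref{sam}) for $\mathcal{Z}_{m}(x)$, the Binet-type expression (\ref{sim}) for $\mathcal{BJ}_{m}^{(a,b,c)}(x)$, and the relations $\omega_{1}\omega_{2}=1$, $\omega_{1}+\omega_{2}=-1$ (so that $(\omega_{1}x-1)(\omega_{2}x-1)=x^{2}+x+1$ and $(\omega_{1}\omega_{2})^{n}=1$). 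All of these make sense for negative indices as well — equivalently, one extends the sequences to negative indices via the recurrences (\ref{pol}), (\ref{rec}) — so identity (\ref{t2}) holds verbatim with $q=-1$, and for $n\ge 1$ every index occurring is in fact nonnegative.

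Next I would substitute $p=1$, $q=-1$. The index shifts $n+p=n+1$, $n+q=n-1$, $n+p+q=n$ turn the left-hand side of (\ref{t2}) into $\mathcal{BJ}_{n+1}^{(a,b,c)}(x)\mathcal{BJ}_{n-1}^{(a,b,c)}(x)-\big(\mathcal{BJ}_{n}^{(a,b,c)}(x)\big)^{2}$, which matches (\ref{c2}). For the right-hand side I would evaluate the two relevant values of $\mathcal{A}$ from $\mathcal{A}_{m}=\tfrac{\omega_{1}^{m}-\omega_{2}^{m}}{\omega_{1}-\omega_{2}}$: clearly $\mathcal{A}_{1}=1$, while $\mathcal{A}_{-1}=\tfrac{\omega_{1}^{-1}-\omega_{2}^{-1}}{\omega_{1}-\omega_{2}}=\tfrac{\omega_{2}-\omega_{1}}{\omega_{1}\omega_{2}(\omega_{1}-\omega_{2})}=-\tfrac{1}{\omega_{1}\omega_{2}}=-1$ since $\omega_{1}\omega_{2}=1$. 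Hence $\mathcal{A}_{p}\mathcal{A}_{q}=\mathcal{A}_{1}\mathcal{A}_{-1}=-1$, so the first summand $(x^{2}+x+1)\mathcal{A}_{p}\mathcal{A}_{q}\Phi_{1}\Phi_{2}$ collapses to $-(x^{2}+x+1)\Phi_{1}\Phi_{2}$.

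Finally, the second summand $x^{n+1}\Theta\big(\mathcal{B}_{n+q}(p)-x^{q}\mathcal{B}_{n}(p)\big)$ becomes $x^{n+1}\Theta\big(\mathcal{B}_{n-1}(1)-x^{-1}\mathcal{B}_{n}(1)\big)$, where $\mathcal{B}_{n}(1)=\mathcal{Z}_{n+1}(x)-x\,\mathcal{Z}_{n}(x)$ and $\mathcal{B}_{n-1}(1)=\mathcal{Z}_{n}(x)-x\,\mathcal{Z}_{n-1}(x)$ are well defined for $n\ge 1$. Adding the two summands and dividing by $(x^{2}+x+1)^{2}$ reproduces exactly (\ref{c2}), finishing the argument. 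Everything past the negative-index remark is a mechanical substitution, so the only point requiring genuine attention — the main obstacle, modest as it is — is justifying the use of $q=-1$ in (\ref{t2}); the evaluation $\mathcal{A}_{-1}=-1$ via $\omega_{1}\omega_{2}=1$ is the single nontrivial computation.
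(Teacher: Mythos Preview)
Your proposal is correct and follows exactly the paper's approach: the Cassini identity is obtained from the Vajda-type identity (\ref{t2}) of Theorem~\ref{pp} by specializing $p=1$, $q=-1$, with the only computation being $\mathcal{A}_{1}\mathcal{A}_{-1}=-1$. You have in fact supplied more justification than the paper does, which simply lists the specialization without further comment.
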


\begin{corollary}
d'Ocagne identity for generalized bihyperbolic third-order Jacobsthal polynomials. Let $n\geq 0$, $m\geq 0$ be integers such that $n\geq m$. Then
\begin{equation}\label{t2}
\begin{aligned}
\mathcal{BJ}_{n+1}^{(a,b,c)}(x)&\mathcal{BJ}_{m}^{(a,b,c)}(x)-\mathcal{BJ}_{n}^{(a,b,c)}(x)\mathcal{BJ}_{m+1}^{(a,b,c)}(x)\\
&=\frac{1}{(x^{2}+x+1)^{2}}\left\lbrace \begin{array}{cc} 
(x^{2}+x+1)\mathcal{A}_{m-n}\Phi_{1}\Phi_{2}\\
+ x^{n+1}\Theta \left(\mathcal{B}_{m}(1)-x^{m-n}\mathcal{B}_{n}(1)\right) 
\end{array} \right\rbrace.
\end{aligned}
\end{equation}
\end{corollary}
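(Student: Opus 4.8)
The plan is to read this identity off the Vadja formula of Theorem~\ref{pp} by specialising $p=1$ and $q=m-n$, exactly as announced in the list of special cases that precedes the corollary. First I would check that the specialisation is admissible: since $m\geq n\geq 0$ we have $q=m-n\geq 0$, so the hypotheses $n\geq 0$, $p\geq 0$, $q\geq 0$ of Theorem~\ref{pp} are satisfied, and every index that will occur — namely $n+p=n+1$, $n+q=m$ and $n+p+q=m+1$ — is nonnegative, hence lies in the range for which $\mathcal{BJ}_{k}^{(a,b,c)}(x)$ and $\mathcal{Z}_{k}(x)$ have been defined.

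The remaining work is index bookkeeping. With $p=1$ and $q=m-n$, the left-hand side of the second identity in Theorem~\ref{pp} becomes $\mathcal{BJ}_{n+1}^{(a,b,c)}(x)\mathcal{BJ}_{m}^{(a,b,c)}(x)-\mathcal{BJ}_{n}^{(a,b,c)}(x)\mathcal{BJ}_{m+1}^{(a,b,c)}(x)$, which is exactly the claimed left-hand side. On the right-hand side I use $\mathcal{A}_{1}=\frac{\omega_{1}-\omega_{2}}{\omega_{1}-\omega_{2}}=1$, whence $\mathcal{A}_{p}\mathcal{A}_{q}=\mathcal{A}_{m-n}$, and I rewrite the correction term using $\mathcal{B}_{n+q}(p)=\mathcal{B}_{m}(1)$ and $x^{q}\mathcal{B}_{n}(p)=x^{m-n}\mathcal{B}_{n}(1)$, where $\mathcal{B}_{k}(p)=\mathcal{Z}_{k+p}(x)-x^{p}\mathcal{Z}_{k}(x)$. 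Substituting these into Theorem~\ref{pp} gives precisely the displayed formula.

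I do not expect any genuine obstacle here: the statement is a direct specialisation, and the only subtlety worth a remark is that Theorem~\ref{pp} requires $q\geq 0$, which is why the substitution $q=m-n$ forces the hypothesis $m\geq n$ in the corollary. If one prefers a derivation that does not merely cite Theorem~\ref{pp}, one can retrace its proof in this special case: first obtain $\mathcal{Z}_{n+1}(x)\mathcal{Z}_{m}(x)-\mathcal{Z}_{n}(x)\mathcal{Z}_{m+1}(x)=(x^{2}+x+1)\mathcal{A}_{m-n}\Phi_{1}\Phi_{2}$ from (\ref{sam}), putting $A=\omega_{1}x-1$, $B=\omega_{2}x-1$, expanding the product, and using $\omega_{1}\omega_{2}=1$ together with $AB=x^{2}+x+1$; then combine this with the Binet-type representation (\ref{sim}) of $\mathcal{BJ}_{k}^{(a,b,c)}(x)$ to produce the remaining term $x^{n+1}\Theta\bigl(\mathcal{B}_{m}(1)-x^{m-n}\mathcal{B}_{n}(1)\bigr)$.
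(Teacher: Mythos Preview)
Your proposal is correct and follows exactly the paper's approach: the paper does not give a separate proof of this corollary but simply records that it arises from Theorem~\ref{pp} by setting $p=1$ and $q=m-n$, which is precisely the specialisation you carry out (together with the observation $\mathcal{A}_{1}=1$). One small remark: the corollary as stated in the paper has the hypothesis $n\geq m$, whereas the bullet list just above it (and your argument) use $m\geq n$; your version is the one compatible with the requirement $q\geq 0$ in Theorem~\ref{pp}, so you have in effect silently corrected a typo.
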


Now, we give the ordinary generating function for the generalized bihyperbolic third-order Jacobsthal polynomials. 
\begin{theorem}
The generating function for the generalized bihyperbolic third-order Jacobsthal polynomial sequence $\mathcal{BJ}_{n}^{(a,b,c)}(x)$ is 
$$g_{\mathcal{BJ}}(t;x)=\frac{\left\lbrace \begin{array}{cc} 
\mathcal{BJ}_{0}^{(a,b,c)}(x)+\left(\mathcal{BJ}_{1}^{(a,b,c)}(x)-(x-1)\mathcal{BJ}_{0}^{(a,b,c)}(x)\right)t\\
+ \left(\mathcal{BJ}_{2}^{(a,b,c)}(x)-(x-1)\mathcal{BJ}_{1}^{(a,b,c)}(x)-(x-1)\mathcal{BJ}_{0}^{(a,b,c)}(x)\right)t^{2}
\end{array} \right\rbrace}{1-(x-1)t-(x-1)t^{3}-xt^{3}}.$$
\end{theorem}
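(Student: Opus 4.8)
The plan is to run the standard formal power series computation built on the three-term recurrence (\ref{rec}). Set
$$g_{\mathcal{BJ}}(t;x)=\sum_{n\ge 0}\mathcal{BJ}_{n}^{(a,b,c)}(x)\,t^{n},$$
viewed as a formal power series in $t$; its coefficients lie in a commutative ring, so no ordering subtleties arise in what follows. The goal is to multiply $g_{\mathcal{BJ}}(t;x)$ by the characteristic polynomial $D(t)=1-(x-1)t-(x-1)t^{2}-xt^{3}$ attached to the recurrence and to read off the product.

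First I would record the three shifted series
\begin{align*}
(x-1)t\,g_{\mathcal{BJ}}(t;x)&=\sum_{n\ge 1}(x-1)\,\mathcal{BJ}_{n-1}^{(a,b,c)}(x)\,t^{n},\\
(x-1)t^{2}\,g_{\mathcal{BJ}}(t;x)&=\sum_{n\ge 2}(x-1)\,\mathcal{BJ}_{n-2}^{(a,b,c)}(x)\,t^{n},\\
x t^{3}\,g_{\mathcal{BJ}}(t;x)&=\sum_{n\ge 3}x\,\mathcal{BJ}_{n-3}^{(a,b,c)}(x)\,t^{n},
\end{align*}
and subtract all three from $g_{\mathcal{BJ}}(t;x)$. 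For each $n\ge 3$ the coefficient of $t^{n}$ in the difference is
$$\mathcal{BJ}_{n}^{(a,b,c)}(x)-(x-1)\mathcal{BJ}_{n-1}^{(a,b,c)}(x)-(x-1)\mathcal{BJ}_{n-2}^{(a,b,c)}(x)-x\,\mathcal{BJ}_{n-3}^{(a,b,c)}(x)=0$$
by (\ref{rec}). Hence $D(t)\,g_{\mathcal{BJ}}(t;x)$ is a polynomial of degree at most $2$, whose coefficients come only from the terms $n=0,1,2$, namely $\mathcal{BJ}_{0}^{(a,b,c)}(x)$, then $\mathcal{BJ}_{1}^{(a,b,c)}(x)-(x-1)\mathcal{BJ}_{0}^{(a,b,c)}(x)$, and finally $\mathcal{BJ}_{2}^{(a,b,c)}(x)-(x-1)\mathcal{BJ}_{1}^{(a,b,c)}(x)-(x-1)\mathcal{BJ}_{0}^{(a,b,c)}(x)$. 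Since $D(t)$ has constant term $1$ it is invertible in the ring of formal power series, so dividing by $D(t)$ yields the stated closed form; if desired, the initial values (\ref{e3}) may be substituted to display the numerator explicitly in terms of the $\mathcal{J}_{k}^{(3)}(x)$ and $j_{1},j_{2},j_{3}$.

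I expect no genuine obstacle here: the argument is essentially index bookkeeping. The two points that need a little care are that the recurrence (\ref{rec}) is asserted only for $n\ge 3$, so the coefficients of $t^{0},t^{1},t^{2}$ must be collected by hand rather than obtained from the recurrence, and that the denominator of the generating function must be taken to be exactly the characteristic polynomial $1-(x-1)t-(x-1)t^{2}-xt^{3}$ matching (\ref{rec}).
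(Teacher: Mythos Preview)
Your proposal is correct and follows essentially the same generating-function argument as the paper: form the shifted series $(x-1)t\,g$, $(x-1)t^{2}g$, $xt^{3}g$, subtract, apply the recurrence (\ref{rec}) to kill all coefficients with $n\ge 3$, and read off the surviving terms for $n=0,1,2$. You are also right to flag that the denominator must be the characteristic polynomial $1-(x-1)t-(x-1)t^{2}-xt^{3}$; the second $t^{3}$ in the stated formula is a typographical slip that the paper's own proof reproduces.
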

\begin{proof}
Let $$g_{\mathcal{BJ}}(t;x)=\mathcal{BJ}_{0}^{(a,b,c)}(x)+\mathcal{BJ}_{1}^{(a,b,c)}(x)t+\mathcal{BJ}_{2}^{(a,b,c)}(x)t^{2}+\cdots + \mathcal{BJ}_{n}^{(a,b,c)}(x)t^{n}+\cdots $$
be the generating function of the generalized bihyperbolic third-order Jacobsthal polynomials. Hence we have
\begin{align*}
(x-1)tg_{\mathcal{BJ}}(t;x)&=(x-1)\mathcal{BJ}_{0}^{(a,b,c)}(x)t+(x-1)\mathcal{BJ}_{1}^{(a,b,c)}(x)t^{2}\\
&\ \ + (x-1)\mathcal{BJ}_{2}^{(a,b,c)}(x)t^{3}+\cdots \\
(x-1)t^{2}g_{\mathcal{BJ}}(t;x)&=(x-1)\mathcal{BJ}_{0}^{(a,b,c)}(x)t^{2}+(x-1)\mathcal{BJ}_{1}^{(a,b,c)}(x)t^{3}\\
&\ \ + (x-1)\mathcal{BJ}_{2}^{(a,b,c)}(x)t^{4}+\cdots \\
xt^{3}g_{\mathcal{BJ}}(t;x)&=x\mathcal{BJ}_{0}^{(a,b,c)}(x)t^{3}+x\mathcal{BJ}_{1}^{(a,b,c)}(x)t^{4}+x\mathcal{BJ}_{2}^{(a,b,c)}(x)t^{5}+\cdots 
\end{align*}
Using the recurrence (\ref{rec}), we get 
\begin{align*}
g_{\mathcal{BJ}}(t;x)&\left(1-(x-1)t-(x-1)t^{3}-xt^{3}\right)\\
&=\left\lbrace \begin{array}{cc} 
\mathcal{BJ}_{0}^{(a,b,c)}(x)+\left(\mathcal{BJ}_{1}^{(a,b,c)}(x)-(x-1)\mathcal{BJ}_{0}^{(a,b,c)}(x)\right)t\\
+ \left(\mathcal{BJ}_{2}^{(a,b,c)}(x)-(x-1)\mathcal{BJ}_{1}^{(a,b,c)}(x)-(x-1)\mathcal{BJ}_{0}^{(a,b,c)}(x)\right)t^{2}
\end{array} \right\rbrace.
\end{align*}
Thus
$$g_{\mathcal{BJ}}(t;x)=\frac{\left\lbrace \begin{array}{cc} 
\mathcal{BJ}_{0}^{(a,b,c)}(x)+\left(\mathcal{BJ}_{1}^{(a,b,c)}(x)-(x-1)\mathcal{BJ}_{0}^{(a,b,c)}(x)\right)t\\
+ \left(\mathcal{BJ}_{2}^{(a,b,c)}(x)-(x-1)\mathcal{BJ}_{1}^{(a,b,c)}(x)-(x-1)\mathcal{BJ}_{0}^{(a,b,c)}(x)\right)t^{2}
\end{array} \right\rbrace}{1-(x-1)t-(x-1)t^{3}-xt^{3}}.$$
\end{proof}

In the next theorem we give a summation formula for the generalized bihyperbolic third-order Jacobsthal polynomials. In the proof we will use the following result.
\begin{lemma}[Proposition 5, \cite{Ce}]
If $\mathcal{J}_{n}^{(3)}(x)$ is the $n$-th term of the third-order Jacobsthal polynomial sequence, then
\begin{equation}\label{sum}
\sum_{s=0}^{n}\mathcal{J}_{n}^{(3)}(x)=\frac{1}{3(x-1)}\left(\mathcal{J}_{n+2}^{(3)}(x)-(x-2)\mathcal{J}_{n+1}^{(3)}(x)+x\mathcal{J}_{n}^{(3)}(x)-1\right).
\end{equation}
\end{lemma}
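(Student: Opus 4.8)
The plan is to establish (\ref{sum}) by induction on $n$, using nothing more than the recurrence (\ref{pol}) together with the initial data $\mathcal{J}_{0}^{(3)}(x)=0$, $\mathcal{J}_{1}^{(3)}(x)=1$, $\mathcal{J}_{2}^{(3)}(x)=x-1$. Note that (\ref{pol}) presupposes $x^{3}\neq 1$, so in particular $x\neq 1$ and the factor $1/(3(x-1))$ on the right of (\ref{sum}) is harmless; also the sum on the left is to be read as $\sum_{s=0}^{n}\mathcal{J}_{s}^{(3)}(x)$. For the base case $n=0$ the left-hand side equals $\mathcal{J}_{0}^{(3)}(x)=0$, while the right-hand side equals
\[
\frac{1}{3(x-1)}\Bigl(\mathcal{J}_{2}^{(3)}(x)-(x-2)\mathcal{J}_{1}^{(3)}(x)+x\mathcal{J}_{0}^{(3)}(x)-1\Bigr)=\frac{(x-1)-(x-2)-1}{3(x-1)}=0,
\]
so both sides agree (and one may verify $n=1,2$ in the same way as a check).

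For the inductive step, assume (\ref{sum}) holds for a given $n\geq 0$. Adding $\mathcal{J}_{n+1}^{(3)}(x)$ to both sides, multiplying through by $3(x-1)$, and cancelling the common constant term $-1$, the statement for $n+1$ becomes equivalent to the identity
\[
\mathcal{J}_{n+2}^{(3)}(x)+(2x-1)\mathcal{J}_{n+1}^{(3)}(x)+x\mathcal{J}_{n}^{(3)}(x)=\mathcal{J}_{n+3}^{(3)}(x)-(x-2)\mathcal{J}_{n+2}^{(3)}(x)+x\mathcal{J}_{n+1}^{(3)}(x).
\]
Replacing $\mathcal{J}_{n+3}^{(3)}(x)$ on the right by $(x-1)\mathcal{J}_{n+2}^{(3)}(x)+(x-1)\mathcal{J}_{n+1}^{(3)}(x)+x\mathcal{J}_{n}^{(3)}(x)$ via (\ref{pol}) and then collecting the coefficients of $\mathcal{J}_{n+2}^{(3)}(x)$, $\mathcal{J}_{n+1}^{(3)}(x)$ and $\mathcal{J}_{n}^{(3)}(x)$, both sides reduce to $\mathcal{J}_{n+2}^{(3)}(x)+(2x-1)\mathcal{J}_{n+1}^{(3)}(x)+x\mathcal{J}_{n}^{(3)}(x)$, which completes the induction.

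An alternative route, avoiding induction, is to substitute the Binet formula (\ref{bin1}) into $\sum_{s=0}^{n}\mathcal{J}_{s}^{(3)}(x)$, evaluate the three resulting finite geometric series in $x$, $\omega_{1}$ and $\omega_{2}$ in closed form, and recombine using $\omega_{1}+\omega_{2}=-1$, $\omega_{1}\omega_{2}=1$ and $(x-\omega_{1})(x-\omega_{2})=x^{2}+x+1$; this is more computational, the only delicate point being the partial-fraction reassembly. In either approach I do not expect a genuine obstacle — the statement is elementary — and the main thing to watch is careful bookkeeping of the coefficients of the three consecutive terms $\mathcal{J}_{n}^{(3)}(x)$, $\mathcal{J}_{n+1}^{(3)}(x)$, $\mathcal{J}_{n+2}^{(3)}(x)$ when deriving the stated closed form.
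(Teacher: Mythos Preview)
Your inductive proof is correct: the base case checks, and in the inductive step the substitution of the recurrence (\ref{pol}) for $\mathcal{J}_{n+3}^{(3)}(x)$ indeed collapses both sides to $\mathcal{J}_{n+2}^{(3)}(x)+(2x-1)\mathcal{J}_{n+1}^{(3)}(x)+x\mathcal{J}_{n}^{(3)}(x)$, as you claim. You also correctly read the summand as $\mathcal{J}_{s}^{(3)}(x)$ rather than the typo $\mathcal{J}_{n}^{(3)}(x)$.

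Note, however, that the paper does not prove this lemma at all: it is quoted from \cite{Ce} (Proposition~5) and used as a black box. So there is no ``paper's own proof'' to compare against. That said, your induction is precisely the template the paper itself applies immediately afterward to establish the bihyperbolic summation formula (\ref{suma}); your argument is effectively the scalar version of that proof, so methodologically you are in complete alignment with the paper. The Binet-formula alternative you sketch would also work, but the induction is cleaner and matches the surrounding style.
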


\begin{theorem}
Let $n\geq 0$, $a\geq 1$, $b\geq 1$, $c\geq 1$, be integers. Then
\begin{equation}\label{suma}
\sum_{s=0}^{n}\mathcal{BJ}_{n}^{(a,b,c)}(x)=\frac{1}{3(x-1)}\left\lbrace \begin{array}{cc} 
(2x-3)\mathcal{BJ}_{0}^{(a,b,c)}(x)+(x-2)\mathcal{BJ}_{1}^{(a,b,c)}(x)\\
- \mathcal{BJ}_{2}^{(a,b,c)}(x)+\mathcal{BJ}_{n+2}^{(a,b,c)}(x)\\
- (x-2)\mathcal{BJ}_{n+1}^{(a,b,c)}(x)+x\mathcal{BJ}_{n}^{(a,b,c)}(x)
\end{array} \right\rbrace.
\end{equation}
\end{theorem}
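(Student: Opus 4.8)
The plan is to reduce the claim to the scalar summation formula (\ref{sum}) applied coordinatewise, and then to repackage the outcome back into $\mathcal{BJ}$'s by means of the defining relation (\ref{e2}) and the recurrence (\ref{rec}). First I would expand $\sum_{s=0}^{n}\mathcal{BJ}_{s}^{(a,b,c)}(x)$ via (\ref{e2}) into four scalar sums: the real part $\sum_{s=0}^{n}\mathcal{J}_{s}^{(3)}(x)$, and the sums $\sum_{s=0}^{n}\mathcal{J}_{s+k}^{(3)}(x)$ attached to $j_{1},j_{2},j_{3}$ for $k=a,b,c$. The real part is handled by (\ref{sum}) directly. For each shifted sum I would write $\sum_{s=0}^{n}\mathcal{J}_{s+k}^{(3)}(x)=\sum_{t=0}^{n+k}\mathcal{J}_{t}^{(3)}(x)-\sum_{t=0}^{k-1}\mathcal{J}_{t}^{(3)}(x)$, which is legitimate since $k\geq 1$, and apply (\ref{sum}) to each of the two pieces.

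Collecting the four results, the $n$-dependent contributions are $\mathcal{J}_{n+k+2}^{(3)}(x)$, $\mathcal{J}_{n+k+1}^{(3)}(x)$, $\mathcal{J}_{n+k}^{(3)}(x)$ for $k=0,a,b,c$, with coefficients $1$, $-(x-2)$, $x$; by (\ref{e2}) these assemble exactly into $\mathcal{BJ}_{n+2}^{(a,b,c)}(x)-(x-2)\mathcal{BJ}_{n+1}^{(a,b,c)}(x)+x\mathcal{BJ}_{n}^{(a,b,c)}(x)$. The remaining ``boundary'' contributions are $-\mathcal{J}_{k+1}^{(3)}(x)+(x-2)\mathcal{J}_{k}^{(3)}(x)-x\mathcal{J}_{k-1}^{(3)}(x)$ for $k=0,a,b,c$; here I would use that the constant $-1$ appearing in (\ref{sum}) equals $-\bigl(\mathcal{J}_{1}^{(3)}(x)-(x-2)\mathcal{J}_{0}^{(3)}(x)+x\mathcal{J}_{-1}^{(3)}(x)\bigr)$ (recall $\mathcal{J}_{-1}^{(3)}(x)=0$, e.g.\ from the Binet formula (\ref{bin1})), so that the $k=0$ term has the same shape as the others, and the spurious $\pm 1$'s produced by the two pieces of each shifted sum cancel. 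By (\ref{e2}) these boundary terms then assemble into $-\mathcal{BJ}_{1}^{(a,b,c)}(x)+(x-2)\mathcal{BJ}_{0}^{(a,b,c)}(x)-x\mathcal{BJ}_{-1}^{(a,b,c)}(x)$, where $\mathcal{BJ}_{-1}^{(a,b,c)}(x)$ is the natural extension of (\ref{e2}) to index $-1$.

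The last step is to eliminate $\mathcal{BJ}_{-1}^{(a,b,c)}(x)$: since $\mathcal{J}_{-1}^{(3)}(x)=0$, relation (\ref{rec}) remains valid at $n=2$, i.e.\ $x\,\mathcal{BJ}_{-1}^{(a,b,c)}(x)=\mathcal{BJ}_{2}^{(a,b,c)}(x)-(x-1)\mathcal{BJ}_{1}^{(a,b,c)}(x)-(x-1)\mathcal{BJ}_{0}^{(a,b,c)}(x)$; substituting this turns the boundary part into $(2x-3)\mathcal{BJ}_{0}^{(a,b,c)}(x)+(x-2)\mathcal{BJ}_{1}^{(a,b,c)}(x)-\mathcal{BJ}_{2}^{(a,b,c)}(x)$, and dividing through by $3(x-1)$ (valid because $x^{3}\neq 1$ forces $x\neq 1$) gives precisely (\ref{suma}). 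I expect the only real difficulty to be the bookkeeping: tracking the boundary contributions of the four shifted sums and seeing that, after the index-$(-1)$ substitution, they collapse to the compact combination of $\mathcal{BJ}_{0}$, $\mathcal{BJ}_{1}$, $\mathcal{BJ}_{2}$ displayed in the statement. As an independent check (and an alternative proof that avoids the index shifts entirely) one can induct on $n$ using only (\ref{rec}): the base case $n=0$ reduces to $(3x-3)\mathcal{BJ}_{0}^{(a,b,c)}(x)=3(x-1)\mathcal{BJ}_{0}^{(a,b,c)}(x)$, and in the inductive step the increment of the right-hand side equals $\frac{1}{3(x-1)}\bigl(\mathcal{BJ}_{n+3}^{(a,b,c)}(x)-(x-1)\mathcal{BJ}_{n+2}^{(a,b,c)}(x)+(2x-2)\mathcal{BJ}_{n+1}^{(a,b,c)}(x)-x\mathcal{BJ}_{n}^{(a,b,c)}(x)\bigr)$, which reduces to $\mathcal{BJ}_{n+1}^{(a,b,c)}(x)$ once (\ref{rec}) is applied to $\mathcal{BJ}_{n+3}^{(a,b,c)}(x)$.
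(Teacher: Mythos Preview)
Your proposal is correct. Your primary approach---expanding $\sum_{s}\mathcal{BJ}_{s}^{(a,b,c)}(x)$ componentwise via (\ref{e2}), applying the scalar formula (\ref{sum}) to each of the four shifted sums, and then eliminating $\mathcal{BJ}_{-1}^{(a,b,c)}(x)$ through the recurrence---is a genuinely different route from the paper. The paper proves (\ref{suma}) by a direct induction on $n$ using only (\ref{rec}), which is exactly the ``independent check'' you sketch in your final paragraph. Your componentwise argument has the advantage of explaining \emph{where} the constant block $(2x-3)\mathcal{BJ}_{0}+(x-2)\mathcal{BJ}_{1}-\mathcal{BJ}_{2}$ comes from (namely, from the boundary contributions of the scalar sums after the substitution for $x\,\mathcal{BJ}_{-1}$), whereas the induction simply verifies the identity without deriving it; on the other hand, the induction is shorter and avoids the index-shift bookkeeping you flag. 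One small remark: the justification ``since $\mathcal{J}_{-1}^{(3)}(x)=0$, relation (\ref{rec}) remains valid at $n=2$'' is slightly imprecise---what you actually need (and have) is that the scalar recurrence (\ref{pol}) extends to $n=-1$ via the Binet formula, which together with $a,b,c\ge 1$ gives (\ref{rec}) at $n=2$.
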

\begin{proof}
We use induction on $n$. If $n=0$, we obtain
\begin{align*}
\mathcal{BJ}_{0}^{(a,b,c)}(x)&=\frac{1}{3(x-1)}\left\lbrace \begin{array}{cc} 
(2x-3)\mathcal{BJ}_{0}^{(a,b,c)}(x)+(x-2)\mathcal{BJ}_{1}^{(a,b,c)}(x)\\
- \mathcal{BJ}_{2}^{(a,b,c)}(x)+\mathcal{BJ}_{2}^{(a,b,c)}(x)\\
- (x-2)\mathcal{BJ}_{1}^{(a,b,c)}(x)+x\mathcal{BJ}_{0}^{(a,b,c)}(x)
\end{array} \right\rbrace \\
&=3(x-1)\mathcal{BJ}_{0}^{(a,b,c)}(x),
\end{align*}
then the result is obvious. Assuming the formula (\ref{suma}) holds for $n\geq 0$, we shall prove it for $n+1$. Using the induction's hypothesis and formula (\ref{rec}), we have
\begin{align*}
\sum_{s=0}^{n+1}\mathcal{BJ}_{n}^{(a,b,c)}(x)&=\sum_{s=0}^{n}\mathcal{BJ}_{n}^{(a,b,c)}(x)+\mathcal{BJ}_{n+1}^{(a,b,c)}(x)\\
&=\frac{1}{3(x-1)}\left\lbrace \begin{array}{cc} 
(2x-3)\mathcal{BJ}_{0}^{(a,b,c)}(x)+(x-2)\mathcal{BJ}_{1}^{(a,b,c)}(x)\\
- \mathcal{BJ}_{2}^{(a,b,c)}(x)+\mathcal{BJ}_{n+2}^{(a,b,c)}(x)\\
- (x-2)\mathcal{BJ}_{n+1}^{(a,b,c)}(x)+x\mathcal{BJ}_{n}^{(a,b,c)}(x)\\
+ (3x-3)\mathcal{BJ}_{n+1}^{(a,b,c)}(x)
\end{array} \right\rbrace \\
&=\frac{1}{3(x-1)}\left\lbrace \begin{array}{cc} 
(2x-3)\mathcal{BJ}_{0}^{(a,b,c)}(x)+(x-2)\mathcal{BJ}_{1}^{(a,b,c)}(x)\\
- \mathcal{BJ}_{2}^{(a,b,c)}(x)+\mathcal{BJ}_{n+3}^{(a,b,c)}(x)\\
- (x-2)\mathcal{BJ}_{n+2}^{(a,b,c)}(x)+x\mathcal{BJ}_{n+1}^{(a,b,c)}(x)
\end{array} \right\rbrace,
\end{align*}
which ends the proof.
\end{proof}

At the end, we give matrix representations of the polynomials $\mathcal{BJ}_{n}^{(a,b,c)}(x)$. By the equality (\ref{rec}) we get the following result.
\begin{theorem}
Let $n\geq 1$ be an integer. Then
\begin{equation}
\left[
\begin{array}{c}
\mathcal{BJ}_{n+2}^{(a,b,c)}(x) \\ 
\mathcal{BJ}_{n+1}^{(a,b,c)}(x) \\ 
\mathcal{BJ}_{n}^{(a,b,c)}(x)
\end{array}
\right]=\textbf{Q}_{\mathcal{J}} \cdot\left[
\begin{array}{c}
\mathcal{BJ}_{n+1}^{(a,b,c)}(x) \\ 
\mathcal{BJ}_{n}^{(a,b,c)}(x) \\ 
\mathcal{BJ}_{n-1}^{(a,b,c)}(x)
\end{array}
\right],
\end{equation}
where $$\textbf{Q}_{\mathcal{J}}=\left[
\begin{array}{ccc}
x-1& x-1& x \\ 
1& 0 & 0 \\ 
0 & 1& 0
\end{array}
\right].$$
\end{theorem}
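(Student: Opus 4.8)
The plan is to verify the identity by a direct computation of the matrix product on the right-hand side, invoking the recurrence relation (\ref{rec}) from Theorem 2.1. First I would expand $\textbf{Q}_{\mathcal{J}} \cdot [\mathcal{BJ}_{n+1}^{(a,b,c)}(x),\ \mathcal{BJ}_{n}^{(a,b,c)}(x),\ \mathcal{BJ}_{n-1}^{(a,b,c)}(x)]^{T}$ entry by entry. The second and third components are immediate: the second row $(1,0,0)$ of $\textbf{Q}_{\mathcal{J}}$ selects $\mathcal{BJ}_{n+1}^{(a,b,c)}(x)$ and the third row $(0,1,0)$ selects $\mathcal{BJ}_{n}^{(a,b,c)}(x)$, which are exactly the second and third components of the column on the left-hand side.

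The only component requiring the recurrence is the first: the first row $(x-1,\ x-1,\ x)$ of $\textbf{Q}_{\mathcal{J}}$ produces
$$(x-1)\mathcal{BJ}_{n+1}^{(a,b,c)}(x)+(x-1)\mathcal{BJ}_{n}^{(a,b,c)}(x)+x\mathcal{BJ}_{n-1}^{(a,b,c)}(x).$$
Applying Eq. (\ref{rec}) with the index replaced by $n+2$ — which is legitimate precisely because the hypothesis $n\geq 1$ guarantees $n+2\geq 3$, the range in which (\ref{rec}) holds — this sum collapses to $\mathcal{BJ}_{n+2}^{(a,b,c)}(x)$, the first component of the left-hand side. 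This finishes the verification.

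Since nothing beyond a single application of the defining recurrence is needed, I do not anticipate any real obstacle; the theorem is merely the reformulation of the third-order recurrence (\ref{rec}) as a linear map on triples of consecutive polynomials, and the bound $n\geq 1$ is exactly the condition under which this reformulation is valid. One could instead argue inductively, checking a base case and propagating via $\textbf{Q}_{\mathcal{J}}$, but the direct check is shorter. If useful, iterating the identity records the companion-matrix form $[\mathcal{BJ}_{n+2}^{(a,b,c)}(x),\ \mathcal{BJ}_{n+1}^{(a,b,c)}(x),\ \mathcal{BJ}_{n}^{(a,b,c)}(x)]^{T}=\textbf{Q}_{\mathcal{J}}^{\,n}\cdot[\mathcal{BJ}_{2}^{(a,b,c)}(x),\ \mathcal{BJ}_{1}^{(a,b,c)}(x),\ \mathcal{BJ}_{0}^{(a,b,c)}(x)]^{T}$.
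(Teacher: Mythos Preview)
Your proposal is correct and matches the paper's own justification: the paper does not give a formal proof of this theorem but simply states that it follows from the recurrence~(\ref{rec}), which is exactly the single step you carry out in verifying the first component of the matrix product. Your explicit check of the three rows, together with the observation that $n\geq 1$ is precisely what makes (\ref{rec}) applicable at index $n+2$, is a fully detailed version of what the paper leaves implicit.
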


\begin{theorem}
Let $n\geq 0$ be an integer. Then
\begin{equation}\label{mat}
\left[
\begin{array}{ccc}
\mathcal{BJ}_{n+3}^{(a,b,c)}& \mathcal{T}_{n+4}^{(a,b,c)}& x\mathcal{BJ}_{n+2}^{(a,b,c)} \\ 
\mathcal{BJ}_{n+2}^{(a,b,c)}& \mathcal{T}_{n+3}^{(a,b,c)}& x\mathcal{BJ}_{n+1}^{(a,b,c)} \\
\mathcal{BJ}_{n+1}^{(a,b,c)}& \mathcal{T}_{n+2}^{(a,b,c)}& x\mathcal{BJ}_{n}^{(a,b,c)}
\end{array}
\right]=\left[
\begin{array}{ccc}
\mathcal{BJ}_{3}^{(a,b,c)}& \mathcal{T}_{4}^{(a,b,c)}& x\mathcal{BJ}_{2}^{(a,b,c)} \\ 
\mathcal{BJ}_{2}^{(a,b,c)}& \mathcal{T}_{3}^{(a,b,c)}& x\mathcal{BJ}_{1}^{(a,b,c)} \\
\mathcal{BJ}_{1}^{(a,b,c)}& \mathcal{T}_{2}^{(a,b,c)}& x\mathcal{BJ}_{0}^{(a,b,c)}
\end{array}
\right]\cdot \textbf{Q}_{\mathcal{J}}^{n},
\end{equation}
where $\mathcal{T}_{n}^{(a,b,c)}=\mathcal{BJ}_{n}^{(a,b,c)}-(x-1)\mathcal{BJ}_{n-1}^{(a,b,c)}$.
\end{theorem}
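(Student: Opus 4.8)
The plan is to argue by induction on $n$. Write $\mathcal{M}_n$ for the $3\times 3$ matrix appearing on the left-hand side of (\ref{mat}); since the matrix on the right-hand side of (\ref{mat}) is exactly $\mathcal{M}_0$, the assertion to be proved is simply $\mathcal{M}_n=\mathcal{M}_0\cdot \textbf{Q}_{\mathcal{J}}^{n}$. The base case $n=0$ is then trivial, because $\textbf{Q}_{\mathcal{J}}^{0}$ is the $3\times 3$ identity. For the inductive step it suffices to establish the one-step relation $\mathcal{M}_{n+1}=\mathcal{M}_n\cdot \textbf{Q}_{\mathcal{J}}$ for every $n\geq 0$: granting it, the induction hypothesis $\mathcal{M}_n=\mathcal{M}_0\cdot \textbf{Q}_{\mathcal{J}}^{n}$ yields $\mathcal{M}_{n+1}=\mathcal{M}_0\cdot \textbf{Q}_{\mathcal{J}}^{n}\cdot \textbf{Q}_{\mathcal{J}}=\mathcal{M}_0\cdot \textbf{Q}_{\mathcal{J}}^{n+1}$, which is the claim at $n+1$.

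The real content is therefore the verification of $\mathcal{M}_{n+1}=\mathcal{M}_n\cdot \textbf{Q}_{\mathcal{J}}$, which I would carry out by observing that every row of $\mathcal{M}_n$ has the shape $\big[\,\mathcal{BJ}_{m+1}^{(a,b,c)}(x),\ \mathcal{T}_{m+2}^{(a,b,c)}(x),\ x\mathcal{BJ}_{m}^{(a,b,c)}(x)\,\big]$ for $m=n+2,\,n+1,\,n$, and then multiplying such a row by $\textbf{Q}_{\mathcal{J}}$ one column at a time. The first column of $\textbf{Q}_{\mathcal{J}}$ produces $(x-1)\mathcal{BJ}_{m+1}^{(a,b,c)}(x)+\mathcal{T}_{m+2}^{(a,b,c)}(x)$, which collapses to $\mathcal{BJ}_{m+2}^{(a,b,c)}(x)$ directly from the definition $\mathcal{T}_{j}^{(a,b,c)}(x)=\mathcal{BJ}_{j}^{(a,b,c)}(x)-(x-1)\mathcal{BJ}_{j-1}^{(a,b,c)}(x)$. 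The second column produces $(x-1)\mathcal{BJ}_{m+1}^{(a,b,c)}(x)+x\mathcal{BJ}_{m}^{(a,b,c)}(x)$; inserting the recurrence (\ref{rec}) in the form $\mathcal{BJ}_{m+3}^{(a,b,c)}(x)=(x-1)\mathcal{BJ}_{m+2}^{(a,b,c)}(x)+(x-1)\mathcal{BJ}_{m+1}^{(a,b,c)}(x)+x\mathcal{BJ}_{m}^{(a,b,c)}(x)$ rewrites it as $\mathcal{BJ}_{m+3}^{(a,b,c)}(x)-(x-1)\mathcal{BJ}_{m+2}^{(a,b,c)}(x)=\mathcal{T}_{m+3}^{(a,b,c)}(x)$. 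The third column produces simply $x\mathcal{BJ}_{m+1}^{(a,b,c)}(x)$. Hence right multiplication by $\textbf{Q}_{\mathcal{J}}$ sends the row $\big[\,\mathcal{BJ}_{m+1}^{(a,b,c)}(x),\ \mathcal{T}_{m+2}^{(a,b,c)}(x),\ x\mathcal{BJ}_{m}^{(a,b,c)}(x)\,\big]$ to $\big[\,\mathcal{BJ}_{m+2}^{(a,b,c)}(x),\ \mathcal{T}_{m+3}^{(a,b,c)}(x),\ x\mathcal{BJ}_{m+1}^{(a,b,c)}(x)\,\big]$; applied to all three rows this advances every base index by one, which is exactly the passage from $\mathcal{M}_n$ to $\mathcal{M}_{n+1}$.

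I do not expect any real obstacle. Because (\ref{rec}) holds for all indices at least $3$, every application of it above is to some $\mathcal{BJ}_{m+3}^{(a,b,c)}(x)$ with $m\geq n\geq 0$, so no boundary case arises and the one-step relation holds for all $n\geq 0$. The one point that demands attention is the bookkeeping: keeping straight which shifted index each $\mathcal{T}$-term carries, and feeding the recurrence precisely that index. The whole argument can equally well be organized as three separate row-by-row checks, which comes down to the same computation.
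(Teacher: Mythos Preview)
Your proof is correct and follows essentially the same approach as the paper: induction on $n$, with the inductive step reducing to the one-step relation $\mathcal{M}_{n+1}=\mathcal{M}_n\cdot \textbf{Q}_{\mathcal{J}}$, verified column by column using the definition of $\mathcal{T}_{j}^{(a,b,c)}$ and the recurrence~(\ref{rec}). Your row-by-row bookkeeping is in fact cleaner than the paper's display, which contains a typographical slip in the third column of the intermediate matrix.
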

\begin{proof}
We use induction on $n$. If $n=0$ then the result is obvious. Assuming the formula (\ref{mat}) holds for $n\geq 0$, we shall prove it for $n+1$. Using the induction's hypothesis and formula (\ref{rec}), we have
\begin{align*}
&\left[
\begin{array}{ccc}
\mathcal{BJ}_{3}^{(a,b,c)}& \mathcal{T}_{4}^{(a,b,c)}& x\mathcal{BJ}_{2}^{(a,b,c)} \\ 
\mathcal{BJ}_{2}^{(a,b,c)}& \mathcal{T}_{3}^{(a,b,c)}& x\mathcal{BJ}_{1}^{(a,b,c)} \\
\mathcal{BJ}_{1}^{(a,b,c)}& \mathcal{T}_{2}^{(a,b,c)}& x\mathcal{BJ}_{0}^{(a,b,c)}
\end{array}
\right]\cdot \textbf{Q}_{\mathcal{J}}^{n+1}\\
&=\left[
\begin{array}{ccc}
\mathcal{BJ}_{3}^{(a,b,c)}& \mathcal{T}_{4}^{(a,b,c)}& x\mathcal{BJ}_{2}^{(a,b,c)} \\ 
\mathcal{BJ}_{2}^{(a,b,c)}& \mathcal{T}_{3}^{(a,b,c)}& x\mathcal{BJ}_{1}^{(a,b,c)} \\
\mathcal{BJ}_{1}^{(a,b,c)}& \mathcal{T}_{2}^{(a,b,c)}& x\mathcal{BJ}_{0}^{(a,b,c)}
\end{array}
\right]\cdot \textbf{Q}_{\mathcal{J}}^{n}\cdot \left[
\begin{array}{ccc}
x-1& x-1& x \\ 
1& 0 & 0 \\ 
0 & 1& 0
\end{array}
\right]\\
&=\left[
\begin{array}{ccc}
\mathcal{BJ}_{n+3}^{(a,b,c)}& \mathcal{T}_{n+4}^{(a,b,c)}& x\mathcal{BJ}_{n+2}^{(a,b,c)} \\ 
\mathcal{BJ}_{n+2}^{(a,b,c)}& \mathcal{T}_{n+3}^{(a,b,c)}& x\mathcal{BJ}_{n+1}^{(a,b,c)} \\
\mathcal{BJ}_{n+1}^{(a,b,c)}& \mathcal{T}_{n+2}^{(a,b,c)}& x\mathcal{BJ}_{n}^{(a,b,c)}
\end{array}
\right]\left[
\begin{array}{ccc}
x-1& x-1& x \\ 
1& 0 & 0 \\ 
0 & 1& 0
\end{array}
\right]\\
&=\left[
\begin{array}{ccc}
\mathcal{BJ}_{n+4}^{(a,b,c)}&(x-1)\mathcal{BJ}_{n+3}^{(a,b,c)}+x\mathcal{BJ}_{n+2}^{(a,b,c)}& x\mathcal{BJ}_{n+2}^{(a,b,c)} \\ 
\mathcal{BJ}_{n+3}^{(a,b,c)}&(x-1)\mathcal{BJ}_{n+2}^{(a,b,c)}+x\mathcal{BJ}_{n+1}^{(a,b,c)}& x\mathcal{BJ}_{n+1}^{(a,b,c)} \\
\mathcal{BJ}_{n+2}^{(a,b,c)}&(x-1)\mathcal{BJ}_{n+1}^{(a,b,c)}+x\mathcal{BJ}_{n}^{(a,b,c)}& x\mathcal{BJ}_{n}^{(a,b,c)}
\end{array}
\right]\\
&=\left[
\begin{array}{ccc}
\mathcal{BJ}_{n+4}^{(a,b,c)}& \mathcal{T}_{n+5}^{(a,b,c)}& x\mathcal{BJ}_{n+3}^{(a,b,c)} \\ 
\mathcal{BJ}_{n+3}^{(a,b,c)}& \mathcal{T}_{n+4}^{(a,b,c)}& x\mathcal{BJ}_{n+2}^{(a,b,c)} \\
\mathcal{BJ}_{n+2}^{(a,b,c)}& \mathcal{T}_{n+3}^{(a,b,c)}& x\mathcal{BJ}_{n+1}^{(a,b,c)}
\end{array}
\right]
\end{align*}
which ends the proof.
\end{proof}

\section{Conclusions}
In this study, we defined the generalized bihyperbolic third-order Jacobsthal polynomials, which is an extension of the third-order Jacobsthal polynomials. For the generalized bihyperbolic third-order Jacobsthal polynomials, we provided a range of identities, including the Catalan and Cassini identity. For future research, additional identities and generalizations of the third-order modified Jacobsthal polynomials can be studied.


\medskip

\end{document}